\newcommand{\Q}{\mathbb{Q}}
\newcommand{\N}{\mathbb{N}}
\newcommand{\C}{\mathbb{C}}
\renewcommand{\setminus}{\smallsetminus}
\newtheorem{thm}{Theorem}[section]
\newtheorem*{thm*}{Theorem}
\newtheorem{lem}[thm]{Lemma}
\newtheorem{cor}[thm]{Corollary}
\newtheorem*{conjecture*}{Conjecture}
\theoremstyle{remark}
\newtheorem{question}[thm]{Question}
\newtheorem{remark}[thm]{Remark}
\newtheorem{example}[thm]{Example}
\theoremstyle{definition}
\newtheorem{define}[thm]{Definition}
\begin{document}

\title[Blocks of monodromy groups]{Blocks of monodromy groups in Complex Dynamics}

\thanks{MSC class numbers: 37F10 (primary), 20B99 (secondary)}
\thanks{The first author's research was partially supported by NSF grant DMS-0852826.  The second author's research was partially supported by NSF grant DMS-0757856.}

\author[Jones]{Rafe Jones}
\address{
Department of Mathematics and CS,
College of the Holy Cross,
Worcester, MA 
}

\author[Peters]{Han Peters}
\address{
Department of Mathematics,
University of Amsterdam,
Amsterdam, Netherlands 
}
\email{peters@math.sunysb.edu}

\email{rjones@holycross.edu}

\keywords{Iterated monodromy groups, complex dynamics, polynomial iteration, post-critically finite polynomials, conservative polynomials, constant weighted sum of iterates}


\begin{abstract}
Motivated by a problem in complex dynamics, we examine the block structure of the natural action of iterated monodromy groups on the tree of preimages of a generic point.  We show that in many cases, including when the polynomial has prime power degree, there are no large blocks other than those arising naturally from the tree structure.  However, using a method of construction based on real graphs of polynomials, we exhibit a non-trivial example of a degree $6$ polynomial failing to have this property. This example settles a problem raised in a recent paper of the second author regarding constant weighted sums of polynomials in the complex plane.  We also show that degree $6$ is exceptional in another regard, as it is the lowest degree for which the monodromy group of a polynomial is not determined by the combinatorics of the post-critical set.  These results give new applications of iterated monodromy groups to complex dynamics.
\end{abstract}

\maketitle


\section{Introduction}
\label{Introduction}

Let $G$ be a subgroup of the permutation group of a finite set $S$. A subset $E \subset S$ is called a {\emph{$G$-block}} if for every permutation $\sigma \in G$ we have that $\sigma(E) \cap E$ is equal to either $E$ or $\emptyset$. Motivated by a problem in a recent paper \cite{han} of the second author we study $G$-blocks for monodromy groups of polynomial iterates.  These groups are natural finite quotients of the iterated monodromy group of the given polynomial.  The latter have attracted attention mainly for their group-theoretic interest (see e.g. \cite{bartholdi}, \cite{bux}, \cite{nekcantor}, \cite{kai}), and to a lesser extent for the light they shed on the dynamics of the given polynomial (e.g. \cite[Chapter 6]{nekrashevych}).  The results of the present paper give a new application of iterated monodromy groups to complex dynamics, as well as results on their structure.  We remark that little remains known in general about iterated monodromy groups; see \cite{nekpoly} for some general discussion and open questions.

The monodromy group $MG_n(f)$ of the $n$th iterate $f^{\circ n}$ of a complex polynomial $f$ acts by definition on the set $R_n$ of $n$th preimages of a generic point.  Indeed, this action extends to
one on the tree of preimages of all iterates up to $n$.  The primary goal of the present paper is to show that for many, indeed in some sense most, $f$ there are no large $MG_n(f)$-blocks other than those coming from the tree structure.
Here is a special case of our main results in this vein (Theorems \ref{main1} and \ref{main3}):
\begin{thm}
Let $f \in \C[x]$ have degree $d \geq 2$, $f^{\circ n}$ be the $n$th iterate of $f$, and let the monodromy group $MG_n(f)$ of $f^{\circ n}$ act naturally on set $R_n$ of $n$th preimages of a generic point.  Suppose that either $d$ is prime or $MG(f)$ acts primitively (i.e. without non-trivial blocks) on $R_1$.  If $E \subseteq R_n$ is any set containing $a, b$ such that $f^{\circ n-1}(a) \neq f^{\circ n-1}(b)$, then the smallest $MG_n(f)$-block of $R_n$ containing $E$ is $R_n$ itself.
\end{thm}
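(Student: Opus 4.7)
The proof proceeds by strong induction on $n$. The base case $n=1$ is immediate: $E$ contains two distinct elements of $R_1$, and $MG_1(f)$ acts primitively on $R_1$ either by hypothesis or because $|R_1|=d$ is prime (any transitive action on a prime-sized set is primitive); the smallest block containing $E$ is therefore $R_1$.

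For the inductive step, let $B$ denote the smallest $MG_n(f)$-block containing $E$, and set $p := f^{\circ n-1}\colon R_n \to R_1$. For each $w \in p(B)$, define $B_w := B \cap p^{-1}(w)$. Three observations drive the argument: (i) $B_w$ is a block for the action of $\mathrm{Stab}_{MG_n(f)}(w)$ on $p^{-1}(w)$; (ii) via a standard local monodromy argument using a path from $z$ to $w$ in $\mathbb{C}$, this action is equivalent to that of $MG_{n-1}(f)$ on $R_{n-1}$; and (iii) since the setwise stabilizer of $B$ in $MG_n(f)$ is transitive on $B$ and hence on $p(B)$, the size $|B_w|$ is constant over $w \in p(B)$. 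Applying the inductive hypothesis iteratively---viewing $B_w$ first as a block at level $n-1$, then as a block at level $n-2$ inside the smallest natural subtree containing it, and so on---one concludes that $B_w$ must coincide with a subtree of the form $(f^{\circ n-j^*})^{-1}(y_w^*)$ for some $y_w^* \in R_{j^*}$, with a common $j^* \in \{1, 2, \ldots, n\}$. Setting $S := \{y_w^* : w \in p(B)\} \subseteq R_{j^*}$, one checks that $B = (f^{\circ n-j^*})^{-1}(S)$ and that $S$ is an $MG_{j^*}(f)$-block of $R_{j^*}$, since $g$-translates of $B$ are preimages under the equivariant map $f^{\circ n-j^*}$ of $g$-translates of $S$.

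Three cases then remain. If $j^* = 1$, then $S = p(B)$ is an $MG_1(f)$-block of $R_1$ with $|S|\geq 2$; primitivity of $MG_1(f)$ forces $S = R_1$, whence $B = R_n$. If $2 \leq j^* \leq n-1$, then $S$ is a block with $|S|=|p(B)|\geq 2$, and the elements of $S$ corresponding to distinct $w, w' \in p(B)$ satisfy $f^{\circ j^*-1}(y_w^*)=w \neq w' =f^{\circ j^*-1}(y_{w'}^*)$; invoking the inductive hypothesis at the smaller level $j^* < n$ gives $S = R_{j^*}$, and hence $B = R_n$. The remaining case $j^* = n$ yields $|B_w| = 1$ for every $w$ and so $|B| = |p(B)| \geq 2$; this is the main obstacle, and it is excluded by exploiting the $d^n$-cycle $\sigma \in MG_n(f)$ arising from the monodromy loop around infinity. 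Any block of a transitive group containing such a full cycle must take the cyclic form $\{x_{i_0+m(d^n/|B|)} : 0 \leq m < |B|\}$, and a direct computation---using that $\sigma$ induces a $d$-cycle on $R_1$---shows that $|B|=|p(B)|\geq 2$ would force $|B|=1$, a contradiction.
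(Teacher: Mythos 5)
Your proof is correct, but it takes a genuinely different route from the paper's. The paper never inducts on $n$: Theorems \ref{main1} and \ref{main3} are statements about an arbitrary subgroup of $\operatorname{Aut}(T_n)$ containing a full $d^n$-cycle $\sigma$ (the loop around infinity), and the argument is purely tree-combinatorial --- in the prime(-power) case a maximal-distance argument shows every block is a branch (resp.\ basic), and in the primitive case $\sigma^{d}$ is used to show that the restriction of a block to $R_1$ is a block of the level-one action, after which the power $\sigma^{1+dm}$, again a full $d^n$-cycle, lifts primitivity back up to $R_n$. Your induction instead exploits monodromy-specific structure: the load-bearing step is your claim (ii), that $\operatorname{Stab}_{MG_n(f)}(w)$ acts on the fibre over $w$ as \emph{all} of $MG_{n-1}(f)$ (the finite-level self-replicating property); this is true because $V_{n-1}\subseteq f^{-1}(V_n)$ makes $\pi_1(\C\setminus f^{-1}(V_n),w)\to\pi_1(\C\setminus V_{n-1},w)$ surjective and $w$ is again a generic base point, but it --- together with the surjectivity of the restriction maps $MG_n(f)\to MG_{j^*}(f)$ implicitly used when you call $S$ an $MG_{j^*}(f)$-block --- deserves an explicit proof rather than a citation to a ``standard argument.'' Note also that you do not avoid the cycle at infinity: your case $j^*=n$ closes only because a block of the regular cyclic group $\langle\sigma\rangle$ is an arithmetic progression of length dividing $d^n$, and any such progression of length at least $2$ must meet some major branch twice, since $\gcd(|B|,d^{n-1})>1$ when $2\le |B|$ and $|B|$ divides $d^n$ with $n\ge 2$; that computation does work. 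The trade-off: the paper's argument applies to any full-cycle subgroup of $\operatorname{Aut}(T_n)$ and yields the finer prime-power block structure, while yours subsumes the prime case into primitivity of $MG_1(f)$ and shows in passing that, under these hypotheses, every block at every level is a full branch.
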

We also prove a result on the structure of $MG_n(f)$-blocks in the case where $d$ is a power of a prime (Theorem \ref{main1}). 

Our other primary goal is to find polynomials $f$ whose monodromy groups have exceptional properties, such as $R_n$ containing small $MG_n(f)$-blocks not coming from the tree structure.  To do this, we give a method for constructing examples of critically finite $f$ with specified critical portrait (and often even specified monodromy groups of all iterates).  The method relies on building real graphs with certain properties, and gives polynomials with real coefficients.  For instance, in Theorem \ref{main2} we give a polynomial $h$ of degree $6$ such that $R_2$ contains an $MG_2(f)$-block of size 4 containing $a$ and $b$ with $h(a) \neq h(b)$.

A more detailed outline of the paper is as follows.  In Section \ref{postcrit} we discuss how the monodromy group of a polynomial $f$ depends on the critical portrait of $f$, that is, the orbits $c \to f(c) \to f^{\circ 2}(c) \to \cdots$ of the critical points of $f$. There we only look at the monodromy group of the first iterate and we show (Theorem \ref{portrait1} and Theorem \ref{portrait2}) that $6$ is the smallest degree for which the critical portrait of a polynomial does not determine the monodromy group up to conjugacy. In fact we give two explicit polynomials of degree $6$ with real coefficients and identical critical orbit but whose monodromy groups are not isomorphic.  The method of construction is used also in Section \ref{gblocks}.   In Section \ref{tree} we fix notation and give a few definitions.  

In Section \ref{gblocks} we prove the main results of this paper. We first show (Theorem \ref{main1}) that if $f$ is a polynomial whose degree is prime then the only $MG_n(f)$-blocks are branches, and when the degree of $f$ is a power of a prime then the only $MG_n(f)$-blocks are unions of equal-height branches, contained in a branch of height one more. Motivated by this result we call such blocks {\emph {basic blocks}}; see Definition \ref{basicblock}.  We then show (Theorem \ref{main3}) that if $MG_1(f)$ acts primitively on $R_1$, then any $MG_n(f)$-block containing elements that map to distinct members of $R_1$ must be all of $R_n$.  We apply this to the case where $f$ is a conservative polynomial, i.e. all critical points are also fixed points, and $f$ has at least two critical points (Corollary \ref{blockcor}).

Then we show (Theorem \ref{main2}) that there exist polynomials of non-prime-power degree for which there are blocks that are not basic blocks. In fact, we give an explicit example of a degree $6$ polynomial which has $MG_2(f)$-blocks that are non-basic. This polynomial is quite similar to the examples used in the proof of Theorem \ref{portrait2}.

In general it can be quite complicated to compute (iterated) monodromy groups of polynomials. However the polynomial examples presented in this paper have two properties in common that make it much easier to compute their monodromy groups. Firstly the post-critical sets are finite, which means that only a finite number of generators have to be considered. Secondly, the post-critical sets are real, which means that the action of the generators can be determined by looking only at the real graph of the polynomials.

Let us finish the introduction by discussing the motivation for this paper. In \cite{han} the following definition was introduced:

\begin{define}\label{cwsi}
A polynomial $f$ has a {\emph{constant weighted sum of iterates}} (c.w.s.i.) near $z \in \C$ if there exist weights $a_0, a_1, \ldots \in \C$ and a constant $c \in \C$ such that the finite sums
$$
\sum_{n=0}^{N} a_n f^{\circ n}
$$
converge uniformly to the constant function $c$ in a neighborhood of $z$ as $N \rightarrow \infty$.
\end{define}

The requirement that the maps $\sum_{n=0}^{N} a_n f^{\circ n}$ converge to the same constant for any point in the neighborhood of $z$ is very strong and one would not expect convergence to a constant to be possible for many values of $z$. The only cases in which it is known that $f$ has a constant weighted sum of iterates near $z$ is when $f$ is affine or when $z$ lies in a Siegel disc of $f$. Indeed, it was shown in \cite{han} that a generic polynomial $f$ of degree at least $2$ has a c.w.s.i. near $z$ if and only if $z$ lies in a Siegel disc of $f$.

The proof of this result relies on the fact that the monodromy groups of the iterates of a generic polynomial (a polynomial whose iterates $f^{\circ n}$ each have the maximal number of distinct critical values) are equal to the entire group of tree automorphisms. In general the monodromy groups of the iterates may be much smaller but in many cases the monodromy groups can still be used to prove the same result. In fact, if the following question can be answered affirmatively then it still follows that a polynomial of degree at least $2$ has a c.w.s.i near $z$ if and only if $z$ lies in a Siegel disc:

\begin{question}\label{fatou}
Let $f$ be a polynomial of degree at least $2$ and let $V$ be a bounded Fatou component of $f$ that is eventually mapped onto a periodic (super-) attracting basin or attracting petal. Let $N \in \N$.

In the case of an attracting basin or attracting petal, let $p \in F^{\circ N}(V)$ have $d^N$ distinct pre-images and define the sets $S = f^{\circ-N}(\{p\})$ and $E = S\cap V$. In the case of a super-attracting basin, let $U \subset f^{\circ N}(V)$ be an arbitrarily small neighborhood of the super-attracting periodic point in $f^{\circ N}(V)$. Let $p \in U$ again have $d^N$ distinct pre-images and define $S = f^{\circ-N}({p})$ as before. Now $E \subset S$ contains only points that lie in one chosen connected component of $f^{\circ -N}(U)$ in $V$.

Is it possible to choose $N$ such that the smallest $MG(f^{\circ N})$-block of $S$ that contains $E$ has more than $d^{N-1}$ elements?
\end{question}

The orbit of a periodic Fatou component that is a (super-) attracting basin or an attracting petal must always contain a critical point. The integer $N$ can be chosen such that $f^N(V)$ contains the corresponding critical value. As was explained in \cite{han}, we may assume that the neighborhood $U$ of $z$ in Definition \ref{cwsi} is large enough so that $f^{\circ N}(U)$ contains the critical value. It follows that when $p$ is chosen close enough to this critical value, the set $E$ will contain two points $e_1, e_2$ with the property that $f^{\circ N - 1}(e_1) \neq f^{\circ N - 1}(e_2)$.  In the terminology of section \ref{tree}, the points $e_1$ and $e_2$ belong to different major branches.

Theorems \ref{main1} and \ref{main3} say that we can answer Question \ref{fatou} affirmatively when the degree of $f$ is a power of a prime number or when $MG(f)$ acts primitively on the roots of $f$, but negatively for certain polynomials of degree $6$. We obtain the following corollary.

\begin{cor}\label{cor}
Suppose that $f$ is a polynomial with complex coefficients, and that either $f$ has degree $p^k$ with $p$ prime, or $MG(f)$ acts primitively on the $f$-preimages of a generic point.  Then $f$ has a constant weighted sum of iterates near $z \in \C$ if and only if $z$ is eventually mapped into a Siegel disc of $f$.
\end{cor}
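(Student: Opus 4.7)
The plan is to deduce Corollary \ref{cor} from Theorems \ref{main1} and \ref{main3} via the framework already in place in \cite{han}. The easy direction (c.w.s.i.\ on points eventually mapped into Siegel discs) is standard. For the converse I would invoke the classification of bounded Fatou components of a polynomial: every periodic bounded Fatou component is a Siegel disc, an attracting basin, a super-attracting basin, or an attracting (parabolic) petal. The argument of \cite{han} already excludes c.w.s.i.\ near points of the Julia set and the Siegel case is trivial, so it only remains to handle the case in which $z$ lies in a Fatou component that is eventually mapped into an attracting basin, a super-attracting basin, or an attracting petal.

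This is precisely the content of Question \ref{fatou}. As recorded in the paragraph following that question, the existence of a c.w.s.i.\ near such a $z$ would force, for a suitable $N$, that the smallest $MG(f^{\circ N})$-block containing the relevant set $E \subseteq S$ has at most $d^{N-1}$ elements, while $E$ is guaranteed to contain two points $e_1, e_2$ with $f^{\circ N-1}(e_1) \neq f^{\circ N-1}(e_2)$, i.e.\ lying in distinct major branches of the preimage tree. Thus the corollary will follow as soon as I verify that the smallest such block must in fact exceed $d^{N-1}$ in size under either hypothesis.

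The verification is a direct application of the main block-structure theorems. If $MG(f)$ acts primitively on $R_1$, Theorem \ref{main3} forces the smallest $MG(f^{\circ N})$-block containing $E$ to be all of $R_N$, of size $d^N>d^{N-1}$. If $d=p^k$, Theorem \ref{main1} restricts every $MG(f^{\circ N})$-block to a union of equal-height sub-branches sitting inside a single branch of height one more; since $e_1$ and $e_2$ lie in different major branches, no proper sub-branch can contain them both, so the containing branch must be $R_N$ itself and the block must consist of at least two entire major branches. This yields at least $2d^{N-1}>d^{N-1}$ elements and supplies the required bound in the prime-power case.

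The main obstacle I anticipate is not mathematical depth but bookkeeping: ensuring that the integer $N$ demanded by the reduction in \cite{han} (large enough so that $f^{\circ N}(U)$ contains the critical value producing the two points $e_1,e_2$) is compatible with the applicability of Theorems \ref{main1} and \ref{main3}. Because those theorems apply for every $n\ge 1$, this compatibility is automatic, and the corollary follows by chaining the reduction of \cite{han} with the block estimates above.
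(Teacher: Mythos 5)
Your proposal is correct and follows essentially the same route as the paper: reduce to Question \ref{fatou} via the framework of \cite{han} (the relevant set $E$ containing two points in distinct major branches for a suitable $N$), then answer that question affirmatively using Theorem \ref{main3} in the primitive case and Theorem \ref{main1} together with the $2d^{N-1}$ lower bound for basic blocks meeting two major branches in the prime-power case. This matches the paper's own (brief) derivation, including the remark following Theorem \ref{main1}.
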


The negative answer to Question \ref{fatou} only closes off one avenue of proof when $f$ has degree $6$.  It does not imply that it is possible to have a constant weighted sum of iterates when $z$ does not lie in a Siegel disc.  Whether this can occur for a polynomial of degree at least $2$ is still open.

\section{Combinatorics of the post-critical set} \label{postcrit}

Let $f$ be a polynomial of degree $d$.  Denote by $C$ the set of critical points of $f$, and $V$ the set of critical values of $f$, that is, $V = \{f(c) : c \in C\}$.
Then $f : \C \setminus C \to \C \setminus V$ is a covering, and if $p \in \C$ has $d$ distinct inverse images, then every closed loop at $p$ contained in $\C \setminus V$ induces a permutation on the inverse images of $p$.  The subgroup of $S_d$ thus obtained is independent of the point $p$ and is called the {\em monodromy group} of $f$, denoted $MG(f)$.  Let $f^{\circ n}$ denote the $n$th iterate of $f$ (i.e. the $n$-fold composition of $f$ with itself).   We denote by $MG_n(f)$ the monodromy group of $f^{\circ n}$.  It is not a general subgroup of $S_{d^n}$, as it must respect the natural tree structure on preimages of $p$ under $f$ (see Section \ref{tree} for details).

The critical portrait (namely the number and multiplicity of the critical values) of a polynomial $f$ gives us a large amount of information about the monodromy groups of $f^{\circ n}$. For example, let $v$ be a critical value of a polynomial $f$ of degree $d$, and denote the inverse images of $v$ by $z_1, \ldots z_n$. A small enough neighborhood $U$ of $v$ will have $n$ disconnected inverse images $V_1, \ldots, V_n$ with $z_j \in v_j$. Let $\gamma \in U$ be a closed loop at $p \in U$ given by a circle centered at $v$. Then the inverse images of $\gamma$ lie in the sets $V_1, \ldots, V_n$. If $z_j$ is a critical point of $f$ with order $k$ then $\gamma$ induces a full cycle on the $k$ preimages of $p$ in $V_j$.

\label{cycle} If $\gamma$ is a closed loop given by a circle centered at the origin of large enough radius then it induces a full cycle. Indeed, if the radius of $\gamma$ is larger than the modulus of all the (finite) critical points then we can view it as a loop around the point at infinity, a critical point of order equal to the degree of the map.

\begin{thm}\label{portrait1}
The monodromy group of a polynomial of degree at most $5$ is completely determined by its critical portrait, that is, the number and multiplicity of its critical values.
\end{thm}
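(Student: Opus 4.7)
The plan is to extract from the critical portrait the multiset of cycle types of the generators of $MG(f)$, along with the constraint that their product is a $d$-cycle, and then check case by case that these data pin down the conjugacy class of $MG(f)$ in $S_d$ whenever $d\le 5$. The cycle-type input comes from the paragraph preceding the statement: the monodromy around a critical value $v$ is a product of disjoint cycles, one of length $k$ for each preimage of $v$ of local degree $k$. The product condition comes from the observation labelled \ref{cycle}: a large loop around infinity is homotopic to the composition of small loops around the critical values and induces a full $d$-cycle, so $MG(f)$ is in particular transitive.

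The cases $d = 2$ and $d = 3$ are immediate, since the transitive subgroups of $S_2$, $S_3$ are $S_2$, $S_3$, and $A_3$, and the cycle types of the generators distinguish them. For $d = 4$ the critical portraits produce exactly four multisets of generator cycle types. Three transpositions with product a $4$-cycle cannot all lie in $D_4$, which contains only two transpositions, so they generate $S_4$. A transposition $\tau$ together with a $(2,2)$-element $\sigma$ with $\tau\sigma$ a $4$-cycle always generates $D_4$: one checks that every such pair in $S_4$ is $S_4$-conjugate to $\{(13),(12)(34)\}$. A $3$-cycle together with a transposition whose product is a $4$-cycle generates $S_4$, since the resulting group is transitive, contains a $3$-cycle, and contains an odd element. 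A single $4$-cycle generates $\Z/4\Z$.

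The real work lies in $d = 5$, where the transitive subgroups are $S_5$, $A_5$, $F_{20}$, $D_5$, and $\Z/5\Z$ and the cycle-type profiles overlap considerably. Any portrait producing a transposition or a $(2,3)$-element immediately forces $S_5$, as these cycle types occur in no proper transitive subgroup of $S_5$. A portrait producing a $3$-cycle but no transposition forces $A_5$, because the only proper transitive subgroup of $S_5$ containing $3$-cycles is $A_5$. The genuinely delicate portrait is the one consisting of two double-transposition generators, whose cycle types are a priori compatible with $A_5$, $F_{20}$, and $D_5$. I would settle this by a short calculation: if $\sigma_1, \sigma_2$ are involutions with $\sigma_1 \sigma_2 = c$ a $5$-cycle, then $\sigma_1 c \sigma_1^{-1} = \sigma_2 \sigma_1 = c^{-1}$, so $\sigma_1$ inverts $c$ and hence $\langle \sigma_1, \sigma_2\rangle = \langle \sigma_1, c\rangle = D_5$. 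The remaining portraits (single $5$-cycle giving $\Z/5\Z$; $4$-cycle paired with a transposition giving $S_5$; $3$-cycle paired with a $(2,2)$ or another $3$-cycle giving $A_5$) are then immediate, finishing the tabulation.
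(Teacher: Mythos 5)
Your proposal is correct, and its skeleton (enumerate the possible portraits via Riemann--Hurwitz, use the fact that the product of the generators is a full $d$-cycle, then identify the group) matches the paper's; but the group-theoretic input is genuinely different. The paper treats only $d=5$ in detail (leaving $d\le 4$ to the reader), observes that prime degree plus transitivity gives primitivity, and then invokes the classical facts that a primitive group containing a transposition is $S_n$ and one containing a $3$-cycle contains $A_n$; the delicate portrait of two $(2,2)$-generators is settled by the terse claim that after conjugation the generators may be taken to be $(12)(34)$ and $(23)(45)$, giving $D_5$. You instead work uniformly for $d\le 5$ from the explicit list of transitive subgroups of $S_d$ and cycle-type membership (which is why your argument also covers $d=4$ explicitly, where primitivity is unavailable), and your treatment of the two-involution portrait via $\sigma_1 c\sigma_1^{-1}=\sigma_2\sigma_1=c^{-1}$, hence $\langle\sigma_1,\sigma_2\rangle=\langle\sigma_1,c\rangle= D_5$, is cleaner than the paper's conjugation claim and in effect justifies it. Two small points you should make explicit: first, in the degree-$5$ portraits consisting of a $3$-cycle together with another even generator, ``transitive and contains a $3$-cycle'' only narrows the group to $A_5$ or $S_5$; you need to add, as the paper does, that all generators are even, so the group lies in $A_5$. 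Second, in the degree-$4$ case of three transpositions, the phrase ``cannot all lie in $D_4$'' tacitly uses that the three transpositions are distinct; this follows because if two coincided the product would be a transposition rather than a $4$-cycle, and it is worth a sentence.
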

\begin{proof}
This is merely a case of checking all possibilities. We will do so for degree $5$ and leave the smaller degrees to the reader.   

Since the degree is prime and $MG(f)$ is transitive, it must be a primitive subgroup of $S_5$.  It is well known (see e.g. \cite{wielandt}) that if $G$ is a primitive subgroup of $S_n$ that contains a transposition, then $G = S_n$, while if $G$ contains a 3-cycle, then $G$ contains $A_n$.  Thus if $f$ has a critical value that is the image of a single critical point of multiplicity 2, then $MG(f) = S_n$.  Otherwise, by the Riemann-Hurwitz formula, $f$ must have either one or two critical values.  In the former case, $MG(f)$ has only one generator and thus $MG(f) \cong C_5$.  In the latter case, each critical value must either be the image of one critical point of multiplicity 3 or two critical points of multiplicity 2.  Thus $MG(f)$ may be generated by (i) two $3$-cycles, (ii) one $3$-cycle and one element of type $(2,2)$ or (iii) two elements of type $(2,2)$.  In all three cases we have $MG(f) \subseteq A_5$, and for $(i)$ and $(ii)$ we have that $MG(f)$ contains a 3-cycle, and so $MG(f) = A_5$.  In case (iii) after conjugation we may assume the generators are $(12)(34)$ and $(23)(45)$ and we obtain $MG(f) \cong D_5$.  
\end{proof}

In degree $6$ the situation is more complicated:

\begin{thm}\label{portrait2}
There exist polynomials $f, g$ of degree $6$ with identical critical portraits but different monodromy groups. Moreover, one can choose $f$ and $g$ to have real coefficients.
\end{thm}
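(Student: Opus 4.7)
The plan is to exhibit $f$ and $g$ explicitly by the real graph method foreshadowed in the introduction, then to read off their monodromy generators from the graphs and verify that the resulting subgroups of $S_6$ are non-conjugate.  The setup is as in Theorem~\ref{portrait1}: for a polynomial of degree $6$, the group $MG(f)$ is generated by one local generator per finite critical value (whose cycle type records the multiplicities of the critical points above that value) together with a $6$-cycle coming from a loop around $\infty$.  The critical portrait pins down the multiset of cycle types of these generators, but it does \emph{not} specify how the cycles interleave on the preimages of the base point, and different interleavings can realize non-conjugate subgroups of $S_6$.

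Next I would choose a critical portrait compatible with Riemann--Hurwitz (so that $\sum_{c \text{ finite}}(m_c - 1) = 5$) and rich enough that the abstract group generated is not forced by the cycle types alone.  The key feature that distinguishes degree $6$ from degrees $\leq 5$ is that $6$ admits two different non-trivial block systems, of block sizes $2$ and $3$, so it is plausible that the same multiset of cycle types can be realized both inside a proper imprimitive subgroup of $S_6$ and inside a larger primitive one.  I would therefore aim for a portrait whose generators, such as transpositions or a cycle of type $(3,3)$ together with a few short cycles, are compatible with at least two such ambient groups.  The real-graph construction then amounts to drawing two piecewise monotone graphs with the prescribed extrema and critical values, and invoking the fact (standard for real polynomials, and central to the paper) that any such combinatorial graph with correct degree and ramification is realized by a real polynomial of degree $6$, adjusting coefficients if necessary.

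With the graphs drawn, I would label the six real preimages of a generic real base point from left to right and compute the local generators directly: the generator attached to a real critical value $v$ is the product of disjoint cycles whose supports are the tuples of preimages that collide as one approaches $v$ from above, and the $\infty$-generator is the left-to-right $6$-cycle.  The final step is the group-theoretic verification that the two generating sets produce non-conjugate subgroups of $S_6$; one side is easily bounded above by a proper subgroup (for instance by exhibiting a preserved block system of size $2$ or $3$), while the other is shown to contain a transposition or $3$-cycle that destroys every such block system, forcing it to be strictly larger.  I expect the main obstacle to be the coordinated design of the two graphs: one must arrange the heights and left-to-right ordering of the critical values so that the multisets of cycle types genuinely agree while the resulting permutations differ in exactly the combinatorial way needed to yield distinct groups.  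This is also precisely why degree $6$ is the first case where this phenomenon can occur, since the classification in Theorem~\ref{portrait1} leaves no comparable slack below degree $6$.
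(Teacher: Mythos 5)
Your strategy is exactly the paper's: build two real sextics with the same critical portrait by prescribing their real graphs, read the monodromy generators off the graphs using a real base point and loops hugging the real axis, and distinguish the groups by showing one preserves a block system while the other does not. But as written the proposal has a genuine gap: the theorem is an existence statement whose entire content is a concrete pair of examples, and you never produce one. You do not specify the critical portrait, the left-to-right arrangement of critical points and values in either graph, or the resulting permutations; you explicitly defer ``the coordinated design of the two graphs'' as ``the main obstacle,'' which is precisely the step the paper actually carries out. For the record, the paper takes critical points $0<c_1<1<c_2<2$ with $f(c_1)=f(c_2)=2$, $f(0)=f(2)=0$, $f(1)=1$, giving generators $(2\,3)(4\,5)$, $(3\,4)$, $(1\,2)(5\,6)$, which preserve the partition $\{1,6\},\{2,5\},\{3,4\}$ (indeed $f$ is visibly a composition of a cubic and a quadratic); and for $g$ it takes $c_1<c_2<0<c_3<1$ with $g(c_1)=c_1$, $g(c_2)=g(c_3)=1$, $g(0)=g(1)=0$, giving $(1\,2)$, $(2\,3)(4\,5)$, $(3\,4)(5\,6)$, which generate $S_6$. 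Without such data, the claim that the same multiset of cycle types can be interleaved in two inequivalent ways remains plausible but unproved.

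A second, smaller gap is your appeal to a ``standard fact'' that any combinatorial real graph with the correct degree and ramification is realized by a real polynomial. That is not automatic here: the desired shapes force distinct critical points to share critical values (e.g.\ $f(c_1)=f(c_2)$), so this is not the generic/Morse situation, and the paper is careful to supply existence separately for each example --- an explicit formula $f(z)=1+h(\alpha(z-1))/h(1)$ built from an even sextic for $f$, and a degrees-of-freedom count for $g$. (The paper also points out that the Riemann Existence Theorem, the natural blanket tool, does not obviously yield real coefficients, which is exactly the extra claim in the theorem.) Finally, a minor caution on your computational recipe: reading a generator as ``the preimages that collide as one approaches $v$'' only determines its cycle type; which points are actually permuted depends on the homotopy class of the access path (the paper routes loops around intermediate critical values by small semicircles), and this path-tracking is what makes the block-system verification legitimate.
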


\begin{remark}
One can provide a short proof of Theorem \ref{portrait2} using the Riemann Existence Theorem (see \cite{debes} for an exposition), which guarantees that there is $f \in \C[z]$ of degree 6 with $MG(f) = \langle (23)(45), (34), (12)(56) \rangle \neq S_6$, and $g \in \C[z]$ of degree 6, with identical critical portrait to $f$ and $MG(g) = \langle (23)(45), (12), (34)(56) \rangle = S_6$.  However, the Riemann Existence Theorem is non-constructive, and for our purposes it will be advantageous to have an explicit method to find such $f$ and $g$, since in Theorem \ref{main2} we need to construct a polynomial whose second iterate has prescribed monodromy.  In addition, the Riemann Existence Theorem does not guarantee that $f$ and $g$ can be chosen to have real coefficients.  
\end{remark}

\begin{proof} We construct $f$ and $g$ explicitly.  Both have 5 distinct critical points that all lie on the real line.  By choosing our base point also on the real line and choosing our generator loops close to the real line we can use the real graphs of $f$ and $g$ to determine the monodromy groups.
For both $f$ and $g$, two critical points map to a third critical point, and this third critical point gets mapped to a fourth critical point which is a fixed point. The fifth critical point is also fixed. Therefore the critical portraits of $f$ and $g$ are identical.

\begin{figure}[htbp]
\begin{center}
\includegraphics[width = 4in]{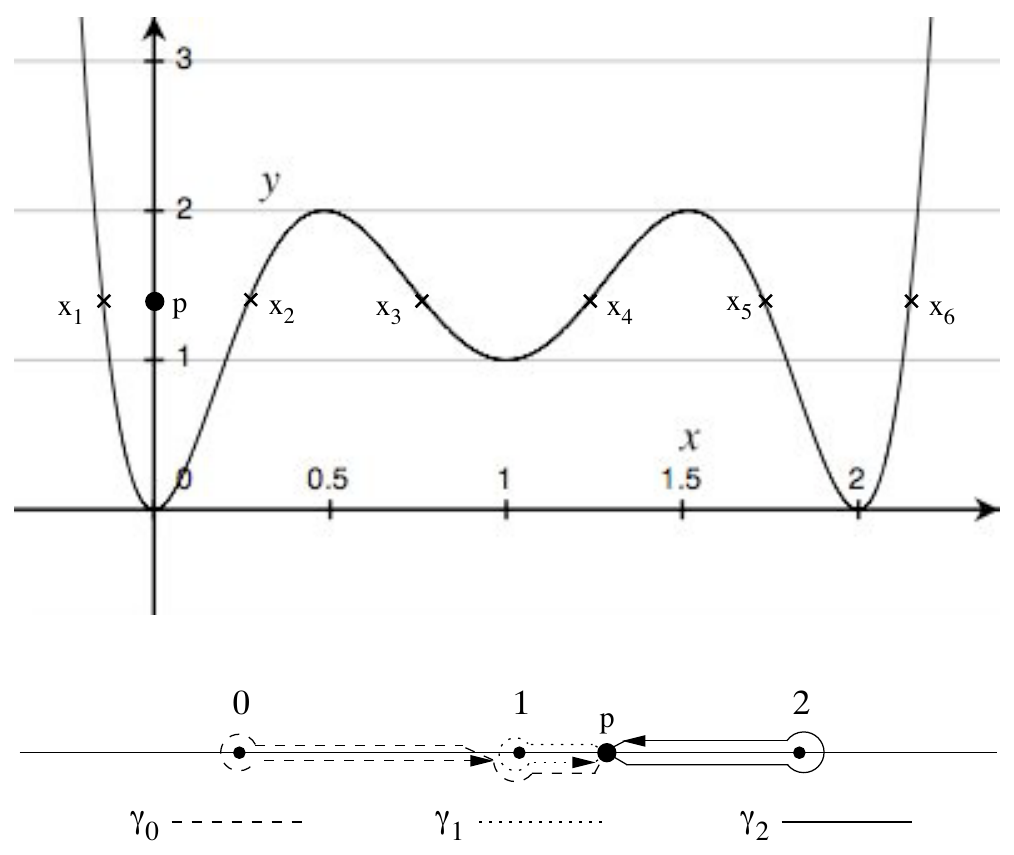}
\end{center}
\caption{Real graph of $f$ and generators of the corresponding fundamental group.}
\label{fig: graph}
\end{figure}

We now describe $f$.  Its critical points are $0, c_1, 1, c_2$ and $2$, where $0 < c_1 < 1 < c_2 < 2$,
$f(c_1) = f(c_2)= 2$, $f(0) = f(2) = 0$ and $f(1) = 1$. This guarantees that $f$ has the critical portrait described above. See Figure \ref{fig: graph} for the real graph of $f$.  The existence of $f$ can be proved by checking the degrees of freedom, but in fact a formula for $f$ can easily be found as follows.  Start with an even polynomial $h$ that has critical points at $0$, $\pm 1$ and $\pm \alpha$, where $h(0) = 0$ and $h(1) = -h(\alpha)$ (taking $\alpha = \sqrt{(2+\sqrt{3})}$ works). The function $f$ is then given by $f(z) = 1 + \frac{h(\alpha(z-1))}{h(1)}$; note that $c_1 = 1-1/\alpha$, $c_2 = 1 + 1/\alpha$.

The critical points of $g$ are $c_1, c_2, 0, c_3$ and $1$, where $c_1 < c_2 < 0 < c_3 < 1$. Now we require that $g(c_1) = c_1$, $g(c_2) = g(c_3) = 1$ and $g(1) = g(0) = 0$.  Again the existence of $g$ can be proved by counting degrees of freedom.

Note that $f$ is the composition of two polynomials of degree $3$ and $2$ respectively. Therefore one immediately sees that the monodromy group of $f$ must have three blocks with $2$ elements each.  Once we determine the monodromy group of $g$, which has no non-trivial blocks, this will be enough to prove the Theorem.  However, since the computations of $MG(f)$ and $MG(g)$ are very similar, we will discuss them both.  

To determine the monodromy group of $f$ we choose a base point $1 < p < 2$ and we denote its (real) preimages by $x_1 < \cdots < x_6$. The critical values of $f$ are $0, 1,$ and $2$, and we define the corresponding generating loops $\gamma_0, \gamma_1,$ and $\gamma_2$ as follows (see Figure \ref{fig: graph}).  The loop $\gamma_2$ moves along the real axis from $p$ to $2-\epsilon$, for $\epsilon>0$ very small, then follows a full clockwise circle centered at $2$ and goes back to $p$ along the real axis. When following the pre-images of this loop the points $x_2$ and $x_3$ first move very close to $c_1$ (thus moving upwards on the real graph of $f$), then switch and move back to $x_3$ and $x_2$, respectively. Similarly, $x_4$ and $x_5$ go very close to $c_2$, then switch and move back to respectively $x_5$ and $x_4$. The preimages of $\gamma_2$ starting at $x_1$ and $x_6$ form closed loops.  Hence assigning to $x_i$ the number $i$, we have that $\gamma_2$ induces $(23)(45) \in S_6$.

The loop $\gamma_1$ is similar: starting at $p$ follow the real axis to $1 + \epsilon$, then loop around $1$ and follow the real axis back to $p$.   By checking the real graph of $f$ one sees that $\gamma_1$ induces $(34)$.  Finally, the loop $\gamma_0$ starts at $p$, follows the real axis to $1 + \epsilon$, follows a small semi-circle centered at $1$ and contained in the lower half plane, then follows the real axis again until very close to $0$, follows a full circle around $0$ and then takes the same path back to $p$.  The graph of $f$ shows that $\gamma_0$ induces $(12)(56)$.

Thus the monodromy group of $f$ is generated by the elements $(23)(45), (34),$ and $(12)(56)$. Notice that the sets $E_1 =\{1,6\}$, $E_2=\{2, 5\}$ and $E_3=\{3, 4\}$ form a partition of ${1, 2, 3, 4, 5, 6}$ that is invariant under the action $\gamma_0, \gamma_1,$ and $\gamma_2$, and hence for all of $MG(f)$.  In other words $E_1, E_2$ and $E_3$ are $MG(f)$-blocks and $MG(f) \neq S_6$.

For the monodromy group of $g$ we take a base point $0< p < 1$ and we denote its preimages by $x_1 < \cdots < x_6$ as before. The critical values of $g$ are $0, 1$ and $c:= c_1$, so we define loops $\gamma_{c}, \gamma_0$ and $\gamma_1$ again following the real axis as much as possible as for $f$, and we obtain that $\gamma_0$ induces $(34)(56)$, $\gamma_1$ induces $(23)(45)$ and $\gamma_c$ induces $(12)$. Since the elements $(34)(56), (23)(45)$ and $(12)$ generate $S_6$ we get $MG(g) = S_6 \neq MG(f)$ which completes the proof.
\end{proof}

\section{Tree structure} \label{tree}

The inverse images of $p$ under $f$ have a natural tree structure.  Indeed, let $R_n = \{x \in \C : f^{\circ n}(x) = p\}$, and note that
\[
R_n = \bigsqcup_{x \in R_{n-1}} f^{-1}(x).
\]
Thus $R_n$ may be thought of as the top level of a tree $T_n$, where each vertex $x$ is connected to
$f(x)$ in the next level down, namely $R_{n-1}$.  Similarly, each element of $R_{n-1}$ is connected to an element of $R_{n-2}$, and so on down to the level $0$, which consists only of $p$.  Note that $p$ connects to exactly $d$ vertices, vertices in the top level connect to only one other vertex, and all vertices besides $p$ and those not in the top level connect to $d + 1$ other vertices.  Setting $R_1 = \{x_1, \ldots, x_d\}$, we refer to the $d$ sets $\{f^{-(n-1)}(x_1)\}, \ldots, \{f^{-(n-1)}(x_d)\}$ as the {\em major branches} of $R_n$.  Note that $R_n$ is the disjoint  union of the major branches.   We call a {\em branch} of $R_n$ any subset of the form $f^{-m}(x)$, where $x \in R_{n-m}$.  We call $m$ the {\em height} of such a branch.  Hence the major branches are simply the branches of height $n-1$ and we consider the entire tree a branch of height $n$.

By definition, the group $MG_n(f)$ acts on $R_n$, and indeed we may think of it as acting on $T_n$ since any $\gamma \in \pi_1(\C \setminus \{\text{critical values of $f^{\circ n}$}\})$ may be lifted to any vertex of the tree.   A key feature of this action is that it preserves the tree structure of $T_n$.   Indeed, $x, y \in T_n$ are connected if and only if $y = f(x)$ or $x = f(y)$; assume without loss of generality that $x = f(y)$.   If $\gamma_y$ is the lift of $\gamma$ under $f^{\circ n}$ that starts at $y$, then $f(\gamma_y)$ is a (and hence the) lift of $\gamma$ under $f^{\circ n-1}$ starting at $x$.  Since $\gamma(y)$ is the endpoint of $\gamma_y$ and $\gamma(x)$ is the endpoint of $f(\gamma_y)$, we have $\gamma(x) = f(\gamma(y))$, implying that $\gamma(x)$ and $\gamma(y)$ are connected.  Thus the tree structure is preserved, giving an injection $MG_n(f) \hookrightarrow \Aut(T_n)$.

The preservation of the tree structure of $T_n$ immediately gives that every branch of $R_n$ is an $MG_n(f)$-block.  It can also occur that an $MG_n(f)$-block is not a single branch but a union of branches of equal height, contained in a single branch of height one more. The easiest polynomial for which this can occur is $f:z \mapsto z^4$. Then we have that $MG(f)$ is a cyclic group of order $4$, so this group has two blocks containing two elements. From now on we will refer to such blocks as basic blocks:

\begin{define} \label{basicblock}
Let $G$ be a subgroup of $\Aut(T_n)$ and let $E \subset R_n$ be a $G$-block. If $E$ is either a single branch or is a union of equal-height branches that are all contained in a single branch of height one more, then we say that $E$ is a {\em basic} block.
\end{define}

\begin{figure}

\hspace{-0.4 in}
\def\JPicScale{0.8}
\ifx\JPicScale\undefined\def\JPicScale{1}\fi
\unitlength \JPicScale mm
\begin{picture}(149.53,50)(0,0)
\linethickness{0.3mm}
\put(40.26,2.63){\line(0,1){20.59}}
\linethickness{0.3mm}
\multiput(9.58,38.65)(0.12,-0.24){64}{\line(0,-1){0.24}}
\linethickness{0.3mm}
\put(17.25,23.21){\line(0,1){15.44}}
\linethickness{0.3mm}
\multiput(17.25,23.21)(0.12,0.24){64}{\line(0,1){0.24}}
\linethickness{0.3mm}
\multiput(32.59,38.65)(0.12,-0.24){64}{\line(0,-1){0.24}}
\linethickness{0.3mm}
\put(40.26,23.21){\line(0,1){15.44}}
\linethickness{0.3mm}
\multiput(40.26,23.21)(0.12,0.24){64}{\line(0,1){0.24}}
\linethickness{0.3mm}
\multiput(55.59,38.65)(0.12,-0.24){64}{\line(0,-1){0.24}}
\linethickness{0.3mm}
\put(63.26,23.21){\line(0,1){15.44}}
\linethickness{0.3mm}
\multiput(63.26,23.21)(0.12,0.24){64}{\line(0,1){0.24}}
\linethickness{0.3mm}
\put(9.58,38.65){\line(0,1){10.29}}
\linethickness{0.3mm}
\multiput(6.51,48.95)(0.12,-0.4){26}{\line(0,-1){0.4}}
\linethickness{0.3mm}
\multiput(9.58,38.65)(0.12,0.4){26}{\line(0,1){0.4}}
\linethickness{0.3mm}
\multiput(14.18,48.95)(0.12,-0.4){26}{\line(0,-1){0.4}}
\linethickness{0.3mm}
\put(17.25,38.65){\line(0,1){10.29}}
\linethickness{0.3mm}
\multiput(17.25,38.65)(0.12,0.4){26}{\line(0,1){0.4}}
\linethickness{0.3mm}
\multiput(40.26,2.63)(0.13,0.117){177}{\line(1,0){0.13}}
\linethickness{0.3mm}
\multiput(17.25,23.21)(0.13,-0.117){176}{\line(1,0){0.13}}
\linethickness{0.3mm}
\put(24.92,38.65){\line(0,1){10.29}}
\linethickness{0.3mm}
\multiput(21.85,48.95)(0.12,-0.4){26}{\line(0,-1){0.4}}
\linethickness{0.3mm}
\multiput(24.92,38.65)(0.12,0.4){26}{\line(0,1){0.4}}
\linethickness{0.3mm}
\multiput(29.52,48.95)(0.12,-0.4){26}{\line(0,-1){0.4}}
\linethickness{0.3mm}
\put(32.59,38.65){\line(0,1){10.29}}
\linethickness{0.3mm}
\multiput(32.59,38.65)(0.12,0.4){26}{\line(0,1){0.4}}
\linethickness{0.3mm}
\put(40.26,38.65){\line(0,1){10.29}}
\linethickness{0.3mm}
\multiput(37.19,48.95)(0.12,-0.4){26}{\line(0,-1){0.4}}
\linethickness{0.3mm}
\multiput(40.26,38.65)(0.12,0.4){26}{\line(0,1){0.4}}
\linethickness{0.3mm}
\multiput(44.86,48.95)(0.12,-0.4){26}{\line(0,-1){0.4}}
\linethickness{0.3mm}
\put(47.93,38.65){\line(0,1){10.29}}
\linethickness{0.3mm}
\multiput(47.93,38.65)(0.12,0.4){26}{\line(0,1){0.4}}
\linethickness{0.3mm}
\multiput(52.53,48.95)(0.12,-0.4){26}{\line(0,-1){0.4}}
\linethickness{0.3mm}
\put(55.59,38.65){\line(0,1){10.29}}
\linethickness{0.3mm}
\multiput(55.59,38.65)(0.12,0.4){26}{\line(0,1){0.4}}
\linethickness{0.3mm}
\multiput(60.2,48.95)(0.12,-0.4){26}{\line(0,-1){0.4}}
\linethickness{0.3mm}
\put(63.26,38.65){\line(0,1){10.29}}
\linethickness{0.3mm}
\multiput(63.26,38.65)(0.12,0.4){26}{\line(0,1){0.4}}
\linethickness{0.3mm}
\multiput(67.86,48.95)(0.12,-0.4){26}{\line(0,-1){0.4}}
\linethickness{0.3mm}
\put(70.93,38.65){\line(0,1){10.29}}
\linethickness{0.3mm}
\multiput(70.93,38.65)(0.12,0.4){26}{\line(0,1){0.4}}
\linethickness{0.15mm}
\qbezier(75.53,48.95)(75.45,48.51)(73.18,48.22)
\qbezier(73.18,48.22)(70.9,47.93)(67.48,47.92)
\qbezier(67.48,47.92)(64.06,47.93)(61.79,48.22)
\qbezier(61.79,48.22)(59.52,48.51)(59.43,48.95)
\qbezier(59.43,48.95)(59.52,49.39)(61.79,49.68)
\qbezier(61.79,49.68)(64.06,49.97)(67.48,49.98)
\qbezier(67.48,49.98)(70.9,49.97)(73.18,49.68)
\qbezier(73.18,49.68)(75.45,49.39)(75.53,48.95)
\linethickness{0.3mm}
\put(114.26,2.63){\line(0,1){20.59}}
\linethickness{0.3mm}
\multiput(83.58,38.65)(0.12,-0.24){64}{\line(0,-1){0.24}}
\linethickness{0.3mm}
\put(91.25,23.21){\line(0,1){15.44}}
\linethickness{0.3mm}
\multiput(91.25,23.21)(0.12,0.24){64}{\line(0,1){0.24}}
\linethickness{0.3mm}
\multiput(106.59,38.65)(0.12,-0.24){64}{\line(0,-1){0.24}}
\linethickness{0.3mm}
\put(114.26,23.21){\line(0,1){15.44}}
\linethickness{0.3mm}
\multiput(114.26,23.21)(0.12,0.24){64}{\line(0,1){0.24}}
\linethickness{0.3mm}
\multiput(129.59,38.65)(0.12,-0.24){64}{\line(0,-1){0.24}}
\linethickness{0.3mm}
\put(137.26,23.21){\line(0,1){15.44}}
\linethickness{0.3mm}
\multiput(137.26,23.21)(0.12,0.24){64}{\line(0,1){0.24}}
\linethickness{0.3mm}
\put(83.58,38.65){\line(0,1){10.29}}
\linethickness{0.3mm}
\multiput(80.51,48.95)(0.12,-0.4){26}{\line(0,-1){0.4}}
\linethickness{0.3mm}
\multiput(83.58,38.65)(0.12,0.4){26}{\line(0,1){0.4}}
\linethickness{0.3mm}
\multiput(88.18,48.95)(0.12,-0.4){26}{\line(0,-1){0.4}}
\linethickness{0.3mm}
\put(91.25,38.65){\line(0,1){10.29}}
\linethickness{0.3mm}
\multiput(91.25,38.65)(0.12,0.4){26}{\line(0,1){0.4}}
\linethickness{0.3mm}
\multiput(114.26,2.63)(0.13,0.117){177}{\line(1,0){0.13}}
\linethickness{0.3mm}
\multiput(91.25,23.21)(0.13,-0.117){176}{\line(1,0){0.13}}
\linethickness{0.3mm}
\put(98.92,38.65){\line(0,1){10.29}}
\linethickness{0.3mm}
\multiput(95.85,48.95)(0.12,-0.4){26}{\line(0,-1){0.4}}
\linethickness{0.3mm}
\multiput(98.92,38.65)(0.12,0.4){26}{\line(0,1){0.4}}
\linethickness{0.3mm}
\multiput(103.52,48.95)(0.12,-0.4){26}{\line(0,-1){0.4}}
\linethickness{0.3mm}
\put(106.59,38.65){\line(0,1){10.29}}
\linethickness{0.3mm}
\multiput(106.59,38.65)(0.12,0.4){26}{\line(0,1){0.4}}
\linethickness{0.3mm}
\put(114.26,38.65){\line(0,1){10.29}}
\linethickness{0.3mm}
\multiput(111.19,48.95)(0.12,-0.4){26}{\line(0,-1){0.4}}
\linethickness{0.3mm}
\multiput(114.26,38.65)(0.12,0.4){26}{\line(0,1){0.4}}
\linethickness{0.3mm}
\multiput(118.86,48.95)(0.12,-0.4){26}{\line(0,-1){0.4}}
\linethickness{0.3mm}
\put(121.93,38.65){\line(0,1){10.29}}
\linethickness{0.3mm}
\multiput(121.93,38.65)(0.12,0.4){26}{\line(0,1){0.4}}
\linethickness{0.3mm}
\multiput(126.53,48.95)(0.12,-0.4){26}{\line(0,-1){0.4}}
\linethickness{0.3mm}
\put(129.59,38.65){\line(0,1){10.29}}
\linethickness{0.3mm}
\multiput(129.59,38.65)(0.12,0.4){26}{\line(0,1){0.4}}
\linethickness{0.3mm}
\multiput(134.2,48.95)(0.12,-0.4){26}{\line(0,-1){0.4}}
\linethickness{0.3mm}
\put(137.26,38.65){\line(0,1){10.29}}
\linethickness{0.3mm}
\multiput(137.26,38.65)(0.12,0.4){26}{\line(0,1){0.4}}
\linethickness{0.3mm}
\multiput(141.86,48.95)(0.12,-0.4){26}{\line(0,-1){0.4}}
\linethickness{0.3mm}
\put(144.93,38.65){\line(0,1){10.29}}
\linethickness{0.3mm}
\multiput(144.93,38.65)(0.12,0.4){26}{\line(0,1){0.4}}
\linethickness{0.15mm}
\qbezier(149.53,48.95)(149.45,48.51)(147.18,48.22)
\qbezier(147.18,48.22)(144.9,47.93)(141.48,47.92)
\qbezier(141.48,47.92)(138.06,47.93)(135.79,48.22)
\qbezier(135.79,48.22)(133.52,48.51)(133.43,48.95)
\qbezier(133.43,48.95)(133.52,49.39)(135.79,49.68)
\qbezier(135.79,49.68)(138.06,49.97)(141.48,49.98)
\qbezier(141.48,49.98)(144.9,49.97)(147.18,49.68)
\qbezier(147.18,49.68)(149.45,49.39)(149.53,48.95)
\linethickness{0.15mm}
\put(125.99,48.94){\line(0,1){0.12}}
\multiput(125.93,48.82)(0.06,0.12){1}{\line(0,1){0.12}}
\multiput(125.82,48.7)(0.12,0.12){1}{\line(0,1){0.12}}
\multiput(125.65,48.59)(0.17,0.11){1}{\line(1,0){0.17}}
\multiput(125.42,48.48)(0.22,0.11){1}{\line(1,0){0.22}}
\multiput(125.15,48.38)(0.27,0.1){1}{\line(1,0){0.27}}
\multiput(124.83,48.29)(0.32,0.09){1}{\line(1,0){0.32}}
\multiput(124.47,48.21)(0.36,0.08){1}{\line(1,0){0.36}}
\multiput(124.07,48.14)(0.4,0.07){1}{\line(1,0){0.4}}
\multiput(123.64,48.09)(0.43,0.06){1}{\line(1,0){0.43}}
\multiput(123.19,48.05)(0.45,0.04){1}{\line(1,0){0.45}}
\multiput(122.72,48.02)(0.47,0.03){1}{\line(1,0){0.47}}
\multiput(122.24,48)(0.48,0.01){1}{\line(1,0){0.48}}
\put(121.76,48){\line(1,0){0.48}}
\multiput(121.28,48.02)(0.48,-0.01){1}{\line(1,0){0.48}}
\multiput(120.81,48.05)(0.47,-0.03){1}{\line(1,0){0.47}}
\multiput(120.36,48.09)(0.45,-0.04){1}{\line(1,0){0.45}}
\multiput(119.93,48.14)(0.43,-0.06){1}{\line(1,0){0.43}}
\multiput(119.53,48.21)(0.4,-0.07){1}{\line(1,0){0.4}}
\multiput(119.17,48.29)(0.36,-0.08){1}{\line(1,0){0.36}}
\multiput(118.85,48.38)(0.32,-0.09){1}{\line(1,0){0.32}}
\multiput(118.58,48.48)(0.27,-0.1){1}{\line(1,0){0.27}}
\multiput(118.35,48.59)(0.22,-0.11){1}{\line(1,0){0.22}}
\multiput(118.18,48.7)(0.17,-0.11){1}{\line(1,0){0.17}}
\multiput(118.07,48.82)(0.12,-0.12){1}{\line(0,-1){0.12}}
\multiput(118.01,48.94)(0.06,-0.12){1}{\line(0,-1){0.12}}
\put(118.01,48.94){\line(0,1){0.12}}
\multiput(118.01,49.06)(0.06,0.12){1}{\line(0,1){0.12}}
\multiput(118.07,49.18)(0.12,0.12){1}{\line(0,1){0.12}}
\multiput(118.18,49.3)(0.17,0.11){1}{\line(1,0){0.17}}
\multiput(118.35,49.41)(0.22,0.11){1}{\line(1,0){0.22}}
\multiput(118.58,49.52)(0.27,0.1){1}{\line(1,0){0.27}}
\multiput(118.85,49.62)(0.32,0.09){1}{\line(1,0){0.32}}
\multiput(119.17,49.71)(0.36,0.08){1}{\line(1,0){0.36}}
\multiput(119.53,49.79)(0.4,0.07){1}{\line(1,0){0.4}}
\multiput(119.93,49.86)(0.43,0.06){1}{\line(1,0){0.43}}
\multiput(120.36,49.91)(0.45,0.04){1}{\line(1,0){0.45}}
\multiput(120.81,49.95)(0.47,0.03){1}{\line(1,0){0.47}}
\multiput(121.28,49.98)(0.48,0.01){1}{\line(1,0){0.48}}
\put(121.76,50){\line(1,0){0.48}}
\multiput(122.24,50)(0.48,-0.01){1}{\line(1,0){0.48}}
\multiput(122.72,49.98)(0.47,-0.03){1}{\line(1,0){0.47}}
\multiput(123.19,49.95)(0.45,-0.04){1}{\line(1,0){0.45}}
\multiput(123.64,49.91)(0.43,-0.06){1}{\line(1,0){0.43}}
\multiput(124.07,49.86)(0.4,-0.07){1}{\line(1,0){0.4}}
\multiput(124.47,49.79)(0.36,-0.08){1}{\line(1,0){0.36}}
\multiput(124.83,49.71)(0.32,-0.09){1}{\line(1,0){0.32}}
\multiput(125.15,49.62)(0.27,-0.1){1}{\line(1,0){0.27}}
\multiput(125.42,49.52)(0.22,-0.11){1}{\line(1,0){0.22}}
\multiput(125.65,49.41)(0.17,-0.11){1}{\line(1,0){0.17}}
\multiput(125.82,49.3)(0.12,-0.12){1}{\line(0,-1){0.12}}
\multiput(125.93,49.18)(0.06,-0.12){1}{\line(0,-1){0.12}}

\end{picture}

\caption{A basic block, left, consisting of branches of height $1$ contained in a single branch of height $2$.  Right, a non-basic block.}
\label{basic1}
\end{figure}
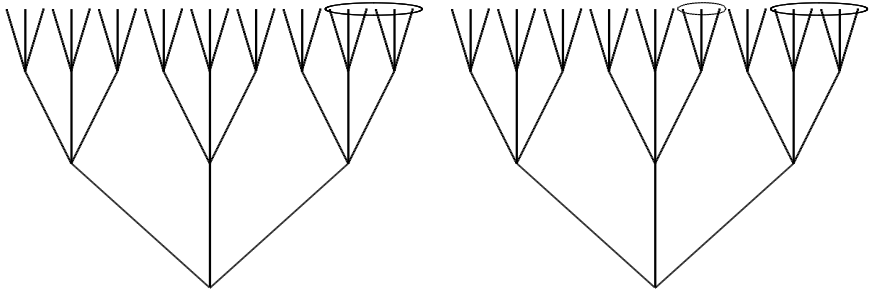

See Figure \ref{basic1} for an illustration of a basic block and a non-basic block.  The following example shows that there are subgroups $G < \Aut(T_n)$ with $G$-blocks that are not basic blocks.

\begin{example}\label{example1}
Let $G$ be the Klein 4-group acting on the complete binary tree of height $2$.  It is generated by the elements $(x_{00} \hspace{0.1 in} x_{01})(x_{10} \hspace{0.1 in}  x_{11})$ and $(x_{00} \hspace{0.1 in}  x_{10})(x_{01} \hspace{0.1 in}  x_{11})$. Then $G$ is a subgroup of $\Aut(T_2)$, yet $\{x_{00}, x_{10}\}$ and $\{x_{01}, x_{11}\}$ form an invariant partition.  Both of these are unions of height $0$ branches that are not contained in a height $1$ branch, and so are not basic.  
\end{example}

As noted in Section \ref{postcrit}, p. \pageref{cycle}, the monodromy group of a polynomial always contains a full cycle. Hence the group in Example \ref{example1} cannot occur as the monodromy group of a polynomial. In the next section we see that for many polynomials $f$ we only need the existence of a full cycle to deduce strong consequences for the possible structures of $MG_n(f)$-blocks.

\section{G-blocks} \label{gblocks}

In this section we prove results about automorphism groups of rooted trees (Theorems \ref{main1} and \ref{main3}), and then derive Corollary \ref{cor}.  We also give a negative answer to Question \ref{fatou} in Theorem \ref{main2}.  By slight abuse of notation we use the same notation ($T_n, R_n$, etc.) as the last section to refer to abstract trees.

\begin{thm}\label{main1}
Let $T_n$ be the complete $d$-ary rooted tree of height $n$, and let $R_n$ denote the vertices of level $n$.  Suppose that $G \leq \Aut(T_n)$ contains a full $d^n$-cycle $\sigma$.  If $d$ is a prime power then every $G$-block of $R_n$ is a basic block.  If $d$ is prime then every $G$-block of $R_n$ is a single branch.
\end{thm}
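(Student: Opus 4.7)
My plan is to reduce the theorem to a classification of the blocks of the regular cyclic action of $\langle \sigma \rangle$ on $R_n$. Since $\langle \sigma \rangle \leq G$, every $G$-block is automatically a $\langle \sigma \rangle$-block, so it suffices to show that every $\langle \sigma \rangle$-block of $R_n$ is a basic block, and in the prime case is in fact a single branch.

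The key preliminary step is to match the tree structure on $R_n$ with the additive structure on $\Z/d^n\Z$. Because the projection $\pi_k \colon R_n \to R_k$ is $\Aut(T_n)$-equivariant, $\sigma$ acts transitively on each level $R_k$, and since $\langle \sigma \rangle$ is cyclic with $|R_k| = d^k$ this action is forced to be a single $d^k$-cycle. Fixing a base point $x_0 \in R_n$, setting $x_i = \sigma^i(x_0)$, and letting $y_j = \pi_k(x_j)$ gives compatible identifications $R_n \cong \Z/d^n\Z$ and $R_k \cong \Z/d^k\Z$ under which $\sigma$ acts as $+1$ and $\pi_k$ becomes reduction modulo $d^k$. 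In particular the height-$m$ branches of $R_n$ are precisely the cosets of the subgroup $d^{n-m}\Z/d^n\Z$.

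Now write $d = p^e$. Since $\langle \sigma \rangle$ acts regularly, every one of its blocks is a coset of some subgroup of $\Z/p^{ne}\Z$, and so has the form $a + p^j\Z/p^{ne}\Z$ for a unique $0 \leq j \leq ne$. The ``branch subgroups'' from the previous paragraph are exactly those $p^j\Z/p^{ne}\Z$ with $j$ a multiple of $e$, namely $j = e(n-m)$; for such $j$ the block is already a single height-$m$ branch. For the remaining values of $j$ there is a unique $m$ with $e(n-m-1) < j < e(n-m)$, which gives strict inclusions $p^{e(n-m)}\Z \subsetneq p^j\Z \subsetneq p^{e(n-m-1)}\Z$ in $\Z/p^{ne}\Z$. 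The left inclusion forces the block to contain the entire height-$m$ branch through each of its elements, while the right inclusion forces it to sit inside a single height-$(m+1)$ branch; so it is a union of several height-$m$ branches contained in one height-$(m+1)$ branch, which is precisely the definition of a basic block.

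Finally, when $d$ itself is prime we have $e = 1$, every $j$ is automatically a multiple of $e$, and only the ``single branch'' case arises. The main step requiring care is the first one: verifying that the projection $\pi_k$ really does coincide with reduction mod $d^k$ under the chosen identifications, so that the tree-theoretic notion of a height-$m$ branch matches the group-theoretic notion of a coset of $d^{n-m}\Z/d^n\Z$. Once this dictionary is in place the two conclusions of the theorem follow from elementary bookkeeping with the $p$-power subgroups of $\Z/p^{ne}\Z$.
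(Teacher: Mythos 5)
Your proposal is correct. The underlying reduction is the same as in the paper's proof---everything is deduced from the cyclic subgroup $\langle\sigma\rangle$ alone---but the execution is genuinely different. The paper argues directly on a given block $E$: it chooses $v,w\in E$ at maximal distance, so that $E$ lies in a single branch of height $t$, writes $w=\sigma^s(v)$, shows $d^{n-t}\mid s$ but $d^{n-t+1}\nmid s$, and uses the prime-power hypothesis to produce a power of $\sigma^s$ of order exactly $d^{t-1}$ acting as a full cycle on each height-$(t-1)$ branch, which forces $E$ to be a union of such branches inside the height-$t$ branch. You instead classify all $\langle\sigma\rangle$-blocks at once: after the identification of $R_n$ with $\Z/d^n\Z$ under which $\sigma$ is translation by $1$ and the level-$k$ projection is reduction mod $d^k$ (correct, and rightly flagged as the step needing care), the standard fact that blocks of a regular action are cosets of subgroups reduces the theorem to comparing $p^j\Z/p^{ne}\Z$ with the ``branch subgroups'' $d^{n-m}\Z/d^n\Z$. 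Your route buys a complete description of the $\langle\sigma\rangle$-blocks and isolates exactly where the prime-power hypothesis enters: the subgroup lattice of $\Z/p^{ne}\Z$ is a chain, so every subgroup is wedged between consecutive branch subgroups---precisely what fails for $d=pq$, consistent with the paper's remark about $x^{pq}$. The paper's route avoids setting up this dictionary and stays closer to the tree combinatorics. If you write yours up, include the one-line proof that a block of the regular cyclic action through $0$ is a subgroup (for $b$ in the block $B$, $b+B$ meets $B$, hence equals $B$), and note explicitly that transitivity of $\sigma$ on $R_k$ follows from surjectivity and equivariance of the projection, so that $\sigma$ restricted to $R_k$ is indeed a single $d^k$-cycle.
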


\begin{remark}
It follows that a $G$-block of $R_n$ that contains points lying in distinct major branches must have at least $2d^{n-1}$ elements, which settles Question \ref{fatou} for polynomials of prime-power degrees.
\end{remark}

\begin{proof}
Let $E$ be a $G$-block of $R_n$, and note that if $g \in G$ satisfies $g(E) \cap E \neq \emptyset$, then
$g(E) = E$, and it follows that the orbit under $g$ of any subset of $E$ is contained in $E$. By choosing an ordering of the vertices at level 1 of $T_n$, one obtains a natural representation of every element of $R_n$ as a base-$d$ string of length $n$.  For elements $v = v_1 \cdots v_n$ and $w = w_1 \cdots w_n$ of $R_n$, define the distance between $v$ and $w$ to be
$$D(v,w) =
\begin{cases}
n \qquad \qquad \text{if $v_1 \neq w_1$} \\
n - m \qquad \text{if $m$ is such that $v_{m+1} \neq w_{m+1}$ but $v_1 \cdots v_{m} = w_1 \cdots w_{m}$}
\end{cases}
$$
Choose $v, w \in E$ such that $D(v,w)$ is maximal.  Then $E$ is contained in a single branch of height $t := D(v,w)$. Let $s \in \N$ be such that $\sigma^s(v) = w$. Then $E$ contains the orbit of any element in $V$ under $\sigma^s$.  Moreover, $s$ is divisible by $d^{n-t}$ (since $\sigma^s$ fixes a branch of height $t$) but not by $d^{n-t+1}$ (since $\sigma^s$ does not fix branches of height $t-1$).  This gives
$d^{t-1} \leq | \sigma^s | \leq d^t$.  But since $d$ is a power of a prime it follows that $d^{t-1}$ divides the order of $\sigma^s$.  Thus a power of $\sigma^s$ is of order $d^{t-1}$ and induces a permutation of maximal order on each of the branches of height $t-1$. Since $E$ is invariant under the action of $\sigma$ it follows that $E$ is a union of branches of height $t-1$, which completes the proof.
\end{proof}

Before stating our next result, we recall a group $G$ acts {\em primitively} on a set $S$ when the only $G$-blocks of $S$ consist of either one point or all of $S$.  For instance, the symmetric group on $n$ letters acts primitively on $\{1, \ldots, n\}$.  Also, every doubly-transitive action is primitive.

\begin{thm} \label{main3}
Let $T_n$ be the complete $d$-ary rooted tree of height $n$, and let $R_n$ denote the vertices of height $n$.  Let $E \subseteq R_n$ be any set containing points in at least two major branches.  Suppose that $G \leq \Aut(T_n)$ contains a full $d^n$-cycle $\sigma$.  Suppose also that the restriction $G_1$ of $G$ to the height-1 vertices $R_1$ acts primitively. Then the smallest $G$-block containing $E$ is all of $R_n$.
\end{thm}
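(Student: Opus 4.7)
The plan is to show that the projection $\pi(B) \subseteq R_1$ of any $G$-block $B \supseteq E$ is itself a $G_1$-block of $R_1$, and then to invoke primitivity. Since $E$ meets at least two major branches, $|\pi(B)| \geq 2$, so primitivity will force $\pi(B) = R_1$, from which $B = R_n$ will follow.

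First I would coordinatize using the full cycle $\sigma$. Since $\sigma$ has order $d^n$ and acts regularly on $R_n$, I identify $R_n$ with $\Z/d^n\Z$ so that $\sigma$ is translation by $1$. The restriction $\bar\sigma$ of $\sigma$ to $R_1$ is a single permutation acting transitively on the $d$-element set $R_1$, hence a $d$-cycle; identifying $R_1$ with $\Z/d\Z$ compatibly turns $\pi$ into the canonical quotient $\Z/d^n\Z \to \Z/d\Z$. Since $\langle \sigma\rangle \leq G$, the set $B$ is automatically a $\langle\sigma\rangle$-block, and regularity of $\langle\sigma\rangle$ forces $B$ to be a coset of a subgroup, so $B = c + r\Z/d^n\Z$ for some divisor $r$ of $d^n$.

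The key step is that \emph{every} $G$-block of size $|B|$ has the form $c' + r\Z/d^n\Z$ for the \emph{same} $r$. To see this, observe that $\langle\sigma\rangle$ acts transitively on the family of $G$-blocks: the $\langle\sigma\rangle$-orbit of $B$ consists of pairwise disjoint $G$-blocks, and its union is all of $R_n$ because $\sigma$ is transitive on $R_n$, so this orbit must exhaust the full $G$-block system. Projecting, $\pi(g(B)) = \pi(c' + r\Z/d^n\Z)$ is a coset of the fixed subgroup $\gcd(r,d)\Z/d\Z \leq \Z/d\Z$. Hence for every $g \in G$, $\pi(B)$ and $\bar g(\pi(B)) = \pi(g(B))$ are two cosets of the same subgroup of $\Z/d\Z$, so they are either equal or disjoint. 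This is exactly the statement that $\pi(B)$ is a $G_1$-block of $R_1$.

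Finally, $|\pi(B)| = d/\gcd(r,d) \geq 2$, so $\pi(B)$ is not a singleton; primitivity of $G_1$ then forces $\pi(B) = R_1$, giving $\gcd(r,d) = 1$. Combined with $r \mid d^n$, this forces $r = 1$ (any prime dividing $r$ would also divide $d$), so $B = R_n$, as desired. The main obstacle is the block step: for a general equivariant surjection between transitive $G$-sets the image of a block need not be a block, but here the regular cyclic subgroup $\langle\sigma\rangle$ pins down every $G$-block as a coset of a single fixed subgroup, which is precisely what makes the projection well-behaved.
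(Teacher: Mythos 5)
Your proof is correct, and it follows the same two-step strategy as the paper --- show that the image of a $G$-block in $R_1$ is a $G_1$-block, then invoke primitivity --- but the mechanism you use for both steps is genuinely different. The paper proves the restriction step by taking $g_1\in G_1$ with $g_1(A_1)\cap A_1\neq\emptyset$, lifting it to some $g\in G$, and correcting by a power $\sigma^{dm}$ (which is trivial on $R_1$ but transitive within each major branch) so that the corrected element visibly maps the block to itself; it then finishes by producing the element $\sigma^{1+dm}$, which is again a full $d^n$-cycle, and taking its orbit. You instead coordinatize by the regular cyclic subgroup $\langle\sigma\rangle$, so that every $G$-block, being in particular a $\langle\sigma\rangle$-block of a regular cyclic action, is a coset $c+r\Z/d^n\Z$ with $r\mid d^n$; this single structural fact gives the restriction step (all $G$-translates of $B$ are cosets of the same subgroup, so their images in $\Z/d\Z$ are cosets of $\gcd(r,d)\Z/d\Z$, hence equal or disjoint) and also replaces the paper's final full-cycle argument by the arithmetic observation that $\gcd(r,d)=1$ and $r\mid d^n$ force $r=1$ --- which is really the same fact the paper uses when it notes $\sigma^{1+dm}$ is a $d^n$-cycle, but seen more transparently. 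One small point of phrasing: your justification that every $G$-translate of $B$ is a coset of the same $r\Z/d^n\Z$ via "exhausting the full $G$-block system" is loose (there is no single block system of all $G$-blocks); the clean argument is either that $g(B)$ meets some $\sigma^k(B)$ and two $G$-translates of a block are equal or disjoint, or simply that $g(B)$ is itself a $G$-block of size $|B|$, hence a coset of the unique subgroup of $\Z/d^n\Z$ of that order. This is not a gap, just a sentence to tighten. A pleasant by-product of your route is the stronger structural conclusion that every $G$-block is an arithmetic progression with common difference dividing $d^n$, which is also the fact underlying the prime-power case treated in Theorem \ref{main1}.
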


\begin{proof}  The idea of the proof is to use the presence of $\sigma$ to show that any $G_n$-block of $R_n$ restricts to a $G_1$-block of $R_1$, which by primitivity must be large.  Again using $\sigma$ one shows the only lift of such a block is all of $R_n$.

Let $A$ be any $G$-block of $R_n$, and denote by $A_1$ the restriction of $A$ to $R_1$.  Let $g_1 \in G_1$ satisfy $g_1(A_1) \cap A_1 \neq \emptyset$, implying that for some $\alpha, \beta \in A_1$ we have $g_1(\alpha) = \beta$.  Let $g \in G$ be any lifting of $g_1$, and let $B_\alpha, B_\beta$ be the major branches of $R_n$ corresponding to $\alpha, \beta$, whence $g(B_\alpha) = B_\beta$.  Note also that there are elements $a \in B_\alpha, b \in B_\beta$ belonging to $A$.
Now $\sigma^d$ maps each major branch to itself, and acts on each as a $d^{n-1}$-cycle.  Thus for some $m$ we have $\sigma^{dm} g(a) = b$.  Since $A$ is a block, it follows that $\sigma^{dm} g$ maps $A$ to itself.  But $\sigma^{dm}$ restricts to the identity on $R_1$, meaning that $\sigma^{dm} g$ restricts to $g_1$.  Thus $g_1$ maps $A_1$ to itself, proving that $A_1$ is a $G_1$-block.

Now let $E$ be as in the statement of the theorem, let $M$ be the minimal $G$-block of $R_n$ containing $E$, and let $M_1$ be the restriction of $M$ to $R_1$.  By the previous paragraph, $M_1$ is a $G_1$-block of $R_1$, and since $E$ contains points in different major branches, $\#M_1 \geq 2$.  By the primitivity of $G_1$, $M_1 = R_1$.  In particular, if $B$ and $B'$ are major branches of $R_n$ with
$\sigma(B) = B'$ then $M$ contains some $a \in B$ and some $b \in B'$.  Thus for some $m$,
$\sigma^{1+dm}(a) = b$.  Hence the orbit of $a$ under $\sigma^{1 + dm}$ is contained in $M$.  But since $1 + dm \equiv 1 \bmod{d}$, $\sigma^{1 + dm}$ acts on $R_n$ as a $d^n$-cycle.  Therefore
$M = R_n$.
\end{proof}

We note that if $G:=MG_n(f)$ is cyclic and $f$ has non-prime power order, which occurs for instance when $f$ is conjugate to $x^{pq}$ for primes $p < q$, then for every $n \geq 2$ there are $G$-blocks that are not basic.  Indeed, if $\sigma$ generates $G$ then an orbit of $\sigma^{q^n}$ is a $G$-block consisting of $p^n$ elements.  This cannot be a union of equal-height branches that is contained in a branch of height one more, since each such branch has a number of elements that is a power of $pq$. However, a polynomial for which the groups $G:=MG_n(f)$ are cyclic will have only a single critical point that is also a fixed point. For such a map it is easy to see that the answer to Question \ref{fatou} is positive, since all preimages of the point $p$ will lie in the same Fatou component. To give a negative answer we have to construct a more complicated map.

As noted above, polynomials conjugate to powers of $x$ have the strong property of having a single critical point that is also a fixed point.  As a first idea for making the map somewhat more complicated, one could simply require that all critical points of $f$ also be fixed points.  Such polynomials are called {\em conservative}, and they have been studied in some detail \cite{tischler, pakovich}.  In particular, their conjugacy classes are determined by relatively simple combinatorial data, and 
there is a natural faithful action of ${\rm Gal}(\overline{\Q}/\Q)$ on these conjugacy classes \cite{pakovich}. 
However, our next result shows that to answer Question \ref{fatou} in the negative, conservative polynomials will not do.  In Theorem \ref{main2} we give an example of a degree-6 polynomial that does provide a negative answer to Question \ref{fatou}

\begin{cor} \label{blockcor}
Suppose that $f \in \C[x]$ is conservative, and $f$ has at least two critical points.  Let $G = MG_n(f)$ act naturally on the tree $T_n$ of preimages of a suitable point $p$, and let $E \subset R_n$ contain points in at least two major branches of $R_n$.  Then the smallest $G$-block of $R_n$ containing $E$ is all of $R_n$.
\end{cor}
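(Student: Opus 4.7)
The plan is to apply Theorem \ref{main3} to $G = MG_n(f)$.  The hypothesis that $G$ contains a full $d^n$-cycle is automatic by the remark on p.~\pageref{cycle}, so everything reduces to showing that $MG(f)$ acts primitively on $R_1$.  For this I would invoke the classical correspondence, stemming from Ritt's work on polynomial decomposition together with L\"uroth's theorem, between nontrivial block systems of $MG(f)$ and nontrivial compositional factorizations $f = g \circ h$ with $\deg g, \deg h \geq 2$.  It then suffices to show that no conservative polynomial with at least two critical points admits such a factorization.

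So suppose $f = g \circ h$ with $b = \deg h \geq 2$ and $r = \deg g \geq 2$.  A point $c$ is critical for $f$ exactly when $g'(h(c))\, h'(c) = 0$, and for every such $c$, conservativity gives $g(h(c)) = c$.  Pick any critical point $e$ of $g$ (which exists since $r \geq 2$).  For each $c' \in h^{-1}(e)$ we have $g'(h(c')) = 0$, so $c'$ is critical for $f$, whence $c' = g(h(c')) = g(e)$.  Therefore $h^{-1}(e) = \{g(e)\}$ is a singleton, so $h$ is totally ramified over $e$ and $g(e)$ is a critical point of $h$ of local degree $b$.  A single totally ramified critical point absorbs all of the ramification of $h$, by $\sum_{c'}(k_h(c') - 1) = b - 1$, so $h$ has exactly one critical point.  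Combined with the equality $\mathrm{Crit}(f) = \mathrm{Crit}(h)$---one direction is the chain rule, and for the other, any $c \in \mathrm{Crit}(f) \setminus \mathrm{Crit}(h)$ satisfies $h(c) \in \mathrm{Crit}(g)$ and hence $c = g(h(c)) \in \mathrm{Crit}(h)$ by the same calculation---this forces $f$ to have exactly one critical point, contradicting $m \geq 2$.

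Hence $MG(f)$ is primitive and Theorem \ref{main3} delivers the corollary.  The main delicate point in writing this up cleanly is invoking (or recalling a proof of) the Ritt/L\"uroth correspondence between block systems of $MG(f)$ and polynomial decompositions; an alternative, fully elementary route would work directly with the standard generators $\sigma_1, \ldots, \sigma_m$ of $MG(f)$---each a single $k_i$-cycle, since the critical values of a conservative polynomial are automatically distinct---and combine the transitive-generation lower bound on the induced $r$-block action with the Riemann-Hurwitz identity $\sum(k_i - 1) = d - 1$ to force the same arithmetic contradiction.
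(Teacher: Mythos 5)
Your proof is correct, but it reaches the key input to Theorem \ref{main3} by a genuinely different route than the paper. Both arguments agree on the frame: the full $d^n$-cycle comes for free from the loop around infinity, the restriction of $MG_n(f)$ to $R_1$ is $MG(f)$, and everything reduces to primitivity of $MG(f)$ on $R_1$. The paper, however, proves something stronger and self-contained: it shows $MG(f)$ acts \emph{doubly} transitively on $R_1$, using the fact that conservativity makes the critical values distinct (so each generator is a single cycle $C_i$), an Euler-formula count showing that after deleting one edge per cycle the associated graph on $R_1$ is a tree --- hence any two cycles meet in at most one point --- and then explicit surgery on a path of cycles from $a$ to $c$ to produce an element fixing $b$. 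You instead invoke the classical L\"uroth/Ritt dictionary between nontrivial $MG(f)$-block systems on $R_1$ and nontrivial decompositions $f = g \circ h$ with $\deg g, \deg h \geq 2$, and then give a short dynamical argument that no conservative $f$ with at least two critical points decomposes: your computation that each critical point $e$ of $g$ satisfies $h^{-1}(e) = \{g(e)\}$, so that $h$ is totally ramified there, that Riemann--Hurwitz then forces $h$ to have a unique critical point, and that $\mathrm{Crit}(f) = \mathrm{Crit}(h)$, is correct and yields the contradiction. What your route buys is brevity and a statement of independent interest (indecomposability of such conservative polynomials); what it costs is reliance on the imported correspondence, including the standard but nontrivial step that a polynomial decomposable through rational functions decomposes through polynomials, which you rightly flag as the delicate point. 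What the paper's route buys is a stronger conclusion (2-transitivity rather than mere primitivity) with no machinery beyond Euler's formula, at the price of a longer combinatorial argument. Your sketched ``fully elementary'' alternative via the single-cycle generators is closer in spirit to the paper's proof but is too vague as stated to evaluate; your main argument, as written, stands on the Ritt/L\"uroth step and is sound.
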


Corollary \ref{blockcor} is an immediate consequence of the following lemma.  Recall that a group $G$ acts doubly transitively on a set $S$ if for any $a,b,c,d \in S$ with $a \neq c$ and $b \neq d$, there is $g \in G$ with $g(a) = b$ and $g(c) = d$.  It is straightforward to show that a doubly-transitive action is primitive.

\begin{lem}
Suppose that $f \in \C[x]$ is conservative, and has at least two critical points.  Then $MG(f)$ acts
doubly-transitively on the set $R_1$ of preimages under $f$ of a generic point.
\end{lem}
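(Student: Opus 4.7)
The plan is to first show that $MG(f)$ acts primitively on $R_1$, then invoke a classical theorem of Jordan to upgrade primitivity to double transitivity.

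For primitivity, I will use the standard correspondence under which $MG(f)$ is imprimitive on $R_1$ precisely when $f$ admits a nontrivial polynomial decomposition $f = g \circ h$ with $\deg g, \deg h \geq 2$. Assume such a decomposition exists. Since $\deg g \geq 2$, $g$ has at least one critical point $v$. For every $c \in h^{-1}(v)$ the chain rule gives $f'(c) = g'(v)h'(c) = 0$, so $c$ is a critical point of $f$, and conservativeness then forces $c = f(c) = g(h(c)) = g(v)$. Hence $h^{-1}(v) = \{g(v)\}$ consists of a single point, so $h(z) - v$ has just one root (of multiplicity $m := \deg h$); thus $h(z) = a(z - c_0)^m + v$ for some constants $a \neq 0$ and $c_0$. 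Because $a$ and $c_0$ are read off the two leading coefficients of $h$, the value $v = h(c_0)$ is uniquely determined by $h$, and therefore $g$ has only this one critical point. Consequently $g(y) = \alpha(y - v)^n + \beta$, and composing yields $f(z) = \alpha a^n(z - c_0)^d + \beta$, a pure power polynomial with just one critical point, contradicting $r \geq 2$. Hence $f$ is indecomposable and $MG(f)$ is primitive.

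To upgrade primitivity, conservativeness tells us that each critical value of $f$ equals a critical point $c_i$, and $c_i$ is its own unique critical preimage (any critical point $c_j$ with $f(c_j) = c_i$ satisfies $c_j = f(c_j) = c_i$). Therefore the monodromy generator around $c_i$ acts on $R_1$ as a single $(k_i + 1)$-cycle $\sigma_i$, fixing the remaining $d - k_i - 1$ preimages. Since $r \geq 2$ and $\sum_j k_j = d - 1$, each $k_i$ is at most $d - 2$, so $2 \leq k_i + 1 \leq d - 1$. The support of $\sigma_i$ is then a proper Jordan set for $MG(f)$ of size at least two, and by a theorem of Jordan (see, e.g., Theorem 7.4A of Dixon--Mortimer, \emph{Permutation Groups}) a primitive group of degree $d$ admitting a Jordan set of size $s$ is $(d - s + 1)$-transitive. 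Applying this with $s = k_i + 1$, $MG(f)$ is $(d - k_i)$-transitive, and $d - k_i \geq 2$ delivers at least double transitivity. The main obstacle is the indecomposability step, where one must leverage conservativeness and the hypothesis $r \geq 2$ in tandem to exclude every nontrivial polynomial factorization.
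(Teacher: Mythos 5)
Your argument is correct, but it follows a genuinely different route from the paper's. The paper proves double transitivity directly and in a self-contained way: it encodes the single-cycle generators $C_i$ (one for each critical value, which conservativeness makes a fixed critical point) as a graph on $R_1$, uses Euler's formula to show that deleting one edge from each cycle leaves a tree, concludes that two distinct cycles meet in at most one point, and then explicitly builds, for given $a,b,c$ with $b \neq a$, $b \neq c$, a word in the $C_i$ fixing $b$ and carrying $a$ to $c$. You instead first obtain primitivity from the dictionary between block systems of $MG(f)$ and decompositions $f = g \circ h$ (Lüroth plus the standard normalization making $g,h$ polynomials), with a nice twist: conservativeness forces $h^{-1}(v)$ to be the single point $g(v)$ for every critical point $v$ of $g$, so $h$ is a shifted power, $v$ is pinned down by the coefficients of $h$ alone, hence $g$ is a shifted power too and $f$ ends up with a single critical point, contradicting the hypothesis (your $r$, by the way, is never introduced). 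You then upgrade via Jordan's theorem, taking as Jordan set the support of the single $(k_i+1)$-cycle generator supplied by conservativeness, after checking $2 \le k_i+1 \le d-1$. One caution: the form of Jordan's theorem you quote --- primitive with a Jordan set of size $s$ implies $(d-s+1)$-transitive --- is not true in that generality (for instance $\mathrm{AGL}_k(\F_2)$ is primitive and the complement of an affine hyperplane is a Jordan set, yet the group is only $3$-transitive); what is true, and is all you actually use, is the classical statement that a primitive group possessing a proper Jordan set of size at least $2$ is doubly transitive, so your proof stands once the citation is corrected. The trade-off between the two approaches: yours is shorter and conceptually clean but imports two nontrivial classical inputs (the decomposability--imprimitivity correspondence and Jordan's theorem), while the paper's conveyor-belt argument is elementary, self-contained, and produces the required permutations explicitly.
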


\begin{proof}
It is enough to show that for $a, b, c \in R_1$ with $a \neq b, c \neq b$, there is $g \in MG(f)$ with $g(b) = b$ and $g(a) = c$.

By assumption the set of critical points of $f$ coincides with the set of critical values of $f$, which we write $\{v_1, \ldots, v_n\}$.  Each $v_i$ gives an element of $MG(f)$ that acts as a single $d_i$-cycle, where $d_i  = 1 + (\text{multiplicity of the critical value $v_i$})$.
Call this cycle $C_i$, and note that the $C_i$ generate $MG(f)$.   Note that
$$\sum_{i = 1}^n d_i = n + \sum_{i = 1}^n  \text{multiplicity of $v_i$} = n + d - 1.$$

Consider the graph $\Gamma$ whose vertex set consists of $R_1$, and where $u \neq v \in R_1$ are connected by one edge for each $C_i$ with $C_i(u) = v$, and also by one edge for each $C_i$ with $C_i(v) = u$.  There are no edges from any vertices to themselves.  Since the action of $MG(f)$ on $R_1$ must be transitive (e.g. since it contains a full cycle), $\Gamma$ is connected.  Hence one may think of the action of $MG(f)$ on $R_1$ as an interlinked system of circular conveyor belts, with the key property that each belt can be made to move independently of the others.

Now obtain a new graph $\Gamma'$ by deleting one edge from each cycle, which preserves connectedness.  The number of vertices of $\Gamma'$ is $d$, and the number of edges is $\sum_{i = 1}^n (d_i - 1)$, which is $n+d-1 - n = d-1$.  Denoting by $F$ the number of regions of the plane enclosed by the edges of the graph (including the region containing infinity), by Euler's formula we have $d - (d - 1) + F = 2$.  Thus $F = 1$, whence $\Gamma'$ is a tree.  This implies that each cycle can have at most one point in common with any other cycle.  Indeed, if $C_i, C_j$ share two distinct points $a$ and $b$, then $C_i^r(a) = b$ and $C_j^s(a) = b$ with $C_i^r$ and $C_j^s$ not the identity.  This implies that in $\Gamma$ there are four distinct paths from $a$ to $b$, namely those given by $C_i^r, C_i^{-r}, C_j^s, C_j^{-s}$.  Deleting any edge from $C_i$ and any from $C_j$ thus still leaves a cycle in $\Gamma'$.

Now let $a, b, c \in R_1$ be given, with $a \neq b, c \neq b$.  In the graph $\Gamma'$ mentioned above, consider a path from $a$ to $c$.  This path may be written as the action of
\begin{equation} \label{path}
C_{i_t}^{e_t} C_{i_{t-1}}^{e_{t-1}} \cdots C_{i_0}^{e_0}
\end{equation}
where the action is on the left, each $e_j$ is a nonzero integer, and $t \leq n$.  Choose the path to be non-self-intersecting, and so that $t$ is minimized.  This implies all the cycles in \eqref{path} are distinct, since a cycle $C$ occurring twice violates the minimality of $t$, as all cycles between occurrences of $C$ could be eliminated.  Since any two cycles meet in at most one point and the path given by \eqref{path} is non-self-intersecting, it follows that any element of $R_1$ may be moved by at most two of the cycles in \eqref{path}.  Moreover, these cycles must be consecutive.
If $b$ is left fixed by all the cycles in \eqref{path}, then we have moved $a$ to $c$ while fixing $b$ and are done.  If not, we let $k$ be maximal so that $C_{i_k}$ does not not fix $b$.  Suppose that $b$ is contained in two cycles, meaning it is the unique element moved by both $C_{i_k}$ and $C_{i_{k-1}}$
We claim the product
\begin{equation} \label{adjusted path}
C_{i_t}^{e_t} C_{i_{t-1}}^{e_{t-1}} \cdots C_{i_{k+1}}^{e_{k+1}} C_{i_{k-1}}^{-e_{k-1}} C_{i_k}^{e_k} C_{i_{k-1}}^{e_{k-1}}  \cdots C_{i_0}^{e_0}
\end{equation}
maps $a$ to $c$ and fixes $b$.    To show \eqref{adjusted path} maps $a$ to $c$, denote by $a'$ the image of $a$ under $C_{i_{k-2}}^{e_{k-2}} \cdots C_{i_0}^{e_0}$, and note it is enough to show
$$C_{i_{k-1}}^{-e_{k-1}} C_{i_k}^{e_k} C_{i_{k-1}}^{e_{k-1}} (a') = C_{i_k}^{e_k} C_{i_{k-1}}^{e_{k-1}}(a')$$
But $C_{i_k}^{e_k} C_{i_{k-1}}^{e_{k-1}} (a')$ is a point moved by $C_{i_k}$, and thus cannot be moved by $C_{i_{k-1}}$ unless it is $a'$, which violates the non-self-intersecting property of the path corresponding to \eqref{path}.  To show \eqref{adjusted path} fixes $b$, we need only show
\begin{equation} \label{partial path}
C_{i_{k-1}}^{-e_{k-1}} C_{i_k}^{e_k} C_{i_{k-1}}^{e_{k-1}}
\end{equation}
fixes $b$.  To see this, note that $C_{i_{k-1}}^{e_{k-1}}$ does not fix $b$, thus moving $b$ away from the unique element moved both by $C_{i_{k-1}}$ and $C_{i_{k}}$.  Hence $C_{i_k}^{e_k}$ fixes
$C_{i_{k-1}}^{e_{k-1}}(b)$, implying that \eqref{partial path} fixes $b$.

In the case where $b$ is contained in only one cycle $C_{i_k}$ of \eqref{path} and $k \neq 0$, one verifies in a similar manner that
$$C_{i_t}^{e_t} C_{i_{t-1}}^{e_{t-1}} \cdots C_{i_k}^{e_k} C_{i_{k-1}}^{e_{k-1}} C_{i_k}^{-e_k} C_{i_{k-2}}^{e_{k-2}}   \cdots C_{i_0}^{e_0}$$
moves $a$ to $c$ and fixes $b$.  To see that $b$ is fixed, let $\alpha$ denote the unique point contained in both $C_{i_k}$ and $C_{i_{k-1}}$, and note that
$C_{i_k}^{e_k}(\alpha)$ is either $c$ (if $k = t$)or the unique point contained in both $C_{i_k}$ and $C_{i_{k+1}}$.
In either case, $C_{i_k}^{-e_k}(b) \neq \alpha$, implying that $C_{i_k}^{-e_k}(b)$ is fixed by $C_{i_{k-1}}$.
If $k = 0$, one checks similarly that
$$C_{i_t}^{e_t} C_{i_{t-1}}^{e_{t-1}} \cdots C_{i_2}^{e_2} C_{i_0}^{-e_0} C_{i_{1}}^{e_{1}}  C_{i_0}^{e_0}$$
maps $a$ to $c$ and fixes $b$.

Finally, if \eqref{path} consists of only one cycle $C_0$, then choose a cycle $C'$ that moves a point
$\alpha$ also moved by $C_0$.  This is possible since $n \geq 2$ by hypothesis.  If $j$ is such that $C_0^j(b) = \alpha$, then the element
$$C_0^{-j} C'^{-1}C_0^{e_0} C' C_0^j$$
maps $a$ to $c$ and fixes $b$.
\end{proof}

\begin{thm}\label{main2}
There exists a polynomial $h$ of degree $6$ such that $R_2$ contains an $MG_2(h)$-block with only 4 elements, two each in two di?erent major branches of $R_2$.   Moreover, $h$ can be chosen so that it provides a negative answer to Question \ref{fatou}.
\end{thm}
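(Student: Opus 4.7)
The plan is to construct $h$ explicitly as a post-critically finite degree-$6$ polynomial with real coefficients and real post-critical set, following the real-graph recipe from the proof of Theorem \ref{portrait2}, so that $MG_2(h)$ can be read off directly from the real graph of $h$. I would model $h$ on the polynomial $f$ of Theorem \ref{portrait2}, arranging that $h$ factors as a cubic composed with a quadratic so that $MG_1(h)$ already preserves a partition of $R_1$ into three size-$2$ blocks $\{\{1,6\},\{2,5\},\{3,4\}\}$; the target is to produce in addition a size-$4$ invariant set $B \subset R_2$ lying over one of these paired major branch systems.

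First, I would specify the critical portrait of $h$: five real critical points with prescribed real critical values, tuned so that (a) the cubic-times-quadratic factorization is realized, giving the level-$1$ block system; (b) at least one critical point is a super-attracting fixed point whose immediate basin $V^{*}$ will play the role of the Fatou component in Question \ref{fatou}; and (c) the remaining data is chosen so that a specific $4$-element subset $B \subset R_2$, consisting of two real preimages in each of two paired major branches of $R_2$ over some $MG_1(h)$-block $\{x_i, x_j\} \subset R_1$, is invariant under every loop generator of $MG_2(h)$. Existence of $h$ with such a portrait can be justified by a degree-of-freedom count as in Theorem \ref{portrait2}, and an explicit polynomial produced by the even-polynomial substitution trick used there.

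Having fixed the portrait, I would enumerate the critical values of $h^{\circ 2}$ (namely the critical values of $h$ together with the $h$-images of $h$-preimages of critical points of $h$), and for each such value draw a small real loop, lift it under $h^{\circ 2}$ using the real graph of $h$, and record the induced permutation on a labelled real $R_2$. Checking each generator in turn shows that $B$ is mapped to itself or disjointly, hence is an $MG_2(h)$-block. Since $B$ meets two distinct major branches, it is neither a single branch nor a union of equal-height branches inside a single branch of height one more, so $B$ is not a basic block in the sense of Definition \ref{basicblock}.

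For the negative answer to Question \ref{fatou}, I would choose a bounded Fatou component $V$ of $h$, namely a suitable preimage of $V^{*}$ (not $V^{*}$ itself, so that $h^{\circ N}(V)$ contains the critical value only for $N \geq 2$), such that for $N = 2$ the set $S \cap V$ of Question \ref{fatou} lies in $B$. Then the smallest $MG_2(h)$-block containing $S \cap V$ has at most $4 < 6 = d^{N-1}$ elements. For larger $N$, the preimage $h^{-(N-2)}(B)$ is again an $MG_N(h)$-block containing the corresponding $S \cap V$ and has $4 \cdot 6^{N-2} < 6^{N-1} = d^{N-1}$ elements, so no admissible $N$ witnesses the required lower bound. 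The main obstacle is point (c): generically $h^{\circ 2}$ has no polynomial decomposition beyond $h \circ h$, so the non-basic block cannot be extracted from an algebraic factorization; it must be engineered by a delicate, non-generic choice of how the real critical values of $h$ and their real $h$-preimages interlock, and then verified on the real graph, which is precisely where the construction technique of Theorem \ref{portrait2} is essential as both a search and a verification tool.
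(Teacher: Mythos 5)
Your overall strategy is the same as the paper's: build a real, post-critically finite degree-$6$ polynomial modeled on the $f$ of Theorem \ref{portrait2}, read off the generators of $MG_2(h)$ from the real graph, exhibit an invariant partition of $R_2$ into $4$-element sets, and then dispose of Question \ref{fatou} by pulling the small block back (the $4\cdot 6^{N-2}$ count for large $N$ is exactly the paper's). But there is a genuine gap at the heart of the argument: your step (c) simply postulates that the critical data can be ``tuned'' so that a specific $4$-element set is invariant under every generator of $MG_2(h)$. That is the statement to be proved, not a parameter one can dial in: a degree-of-freedom count (or the even-polynomial trick) only produces a polynomial realizing a prescribed critical portrait; it says nothing about whether the resulting second-level monodromy admits a non-basic block, and Theorems \ref{main1} and \ref{main3} show that for most portraits it cannot. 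The paper's actual content is the choice of portrait that makes this work -- keep $h(c_1)=h(c_2)=2$, $h(0)=h(2)=0$, but make the critical point $1$ lie on a super-attracting $2$-cycle $1\mapsto a\mapsto 1$ with $0<a<c_1$, so the post-critical set is $\{0,a,1,2\}$ -- followed by the explicit wreath-recursion computation of the four generators $\alpha,\beta,\gamma,\delta$ and a verification that a concrete partition of $R_2$ into $4$-element sets (e.g.\ containing $\{x_{32},x_{35},x_{42},x_{45}\}$) is respected. Without producing such a portrait and carrying out (or at least describing) that verification, your proposal does not establish existence of the block.

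A secondary problem is the Fatou part. You propose a super-attracting \emph{fixed} critical point with basin $V^{*}$ and a strictly preperiodic component $V$ chosen so that ``$h^{\circ N}(V)$ contains the critical value only for $N\geq 2$''; if $h(V)=V^{*}$ this is already inconsistent, and in any case you must still arrange that the set $S\cap V$ of Question \ref{fatou} lands inside the engineered block $B$ for every admissible $N$ -- an additional constraint that has to be built into the (missing) construction. The paper instead uses the basin of the super-attracting $2$-cycle $\{1,a\}$ itself: for odd $N$ the base point sits near $1$, which is not a critical value, and all relevant preimages lie in a single major branch ($6^{N-1}$ elements); for $N=2$ they lie in the $4$-element block; for even $N>2$ one pulls back, getting at most $4\cdot 6^{N-2}$ elements. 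So while your outline points in the right direction, both the existence of the block and the compatibility of the Fatou component with that block are asserted rather than proved.
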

\begin{proof}
The construction of the polynomial $h$ is very similar to the constructions in the proof of Theorem \ref{portrait2}.  We will assume that the reader is familiar with the polynomial $f$ from Theorem \ref{portrait2}.

Here the polynomial $h$ will again have critical points at $0 < c_1 < 1 < c_2 < 2$ and again $h(c_1) = h(c_2) = 2$ and $h(0) = h(2) = 0$. However this time $h(1)$ is not equal to $1$ but make sure that $0 < h(1) < c_1$ in such a way that $h^{\circ 2}(1) = 1$. Hence there are $4$ post-critical values, namely $0, a, 1$, and $2$, where $a = h(1)$, and the iterated monodromy group of $h$ has four generators.

\begin{figure}[htbp]
\begin{center}
\includegraphics{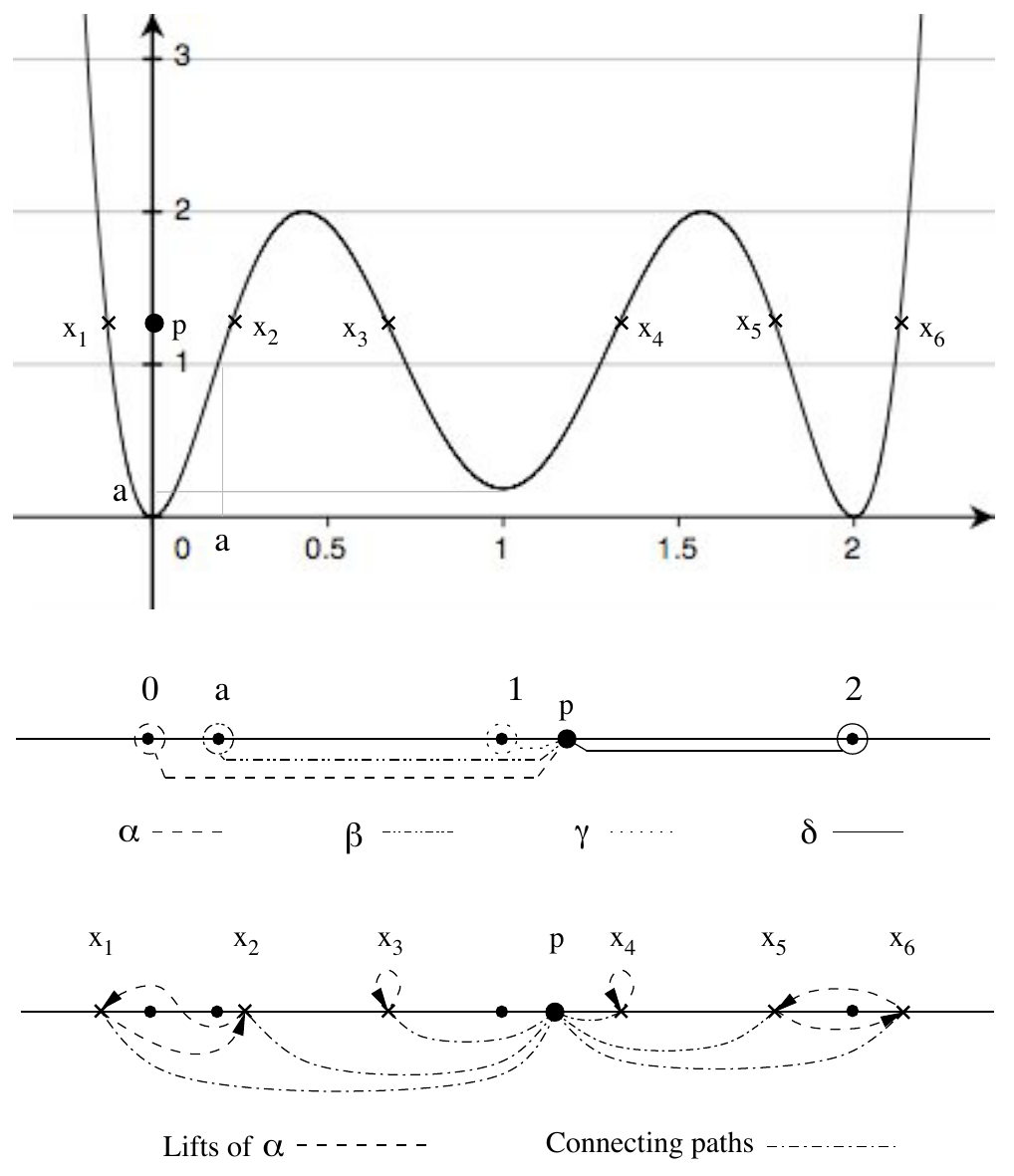}
\end{center}
\caption{Real graph of $h$, generating loops of the corresponding fundamental group
(all loops are counterclockwise), and computation of the action of $\alpha$ on $T$ (all
connecting paths move away from $p$).}
\label{fig: hgroup}
\end{figure}

We denote the generator loops encircling $0, a, 1$, and $2$, by $\alpha, \beta, \gamma,$ and $\delta$, respectively.  The loops are again defined by paths that stay very close to the real axis as in the proof of Theorem \ref{portrait2} (see Figure \ref{fig: hgroup}).  To determine the action of these generators on $T$, we use the method of \cite[Chapter 5]{nekrashevych}.  For instance, consider $\alpha$.  We fix paths $\pi_i$ connecting the basepoint $p$ to each $x_i$, and then for each $x_i$ we compute the lift $\ell_i$ of $\alpha$ beginning at $x_i$.  Then given a vertex of $T$ labeled with a word $iw$, $\alpha$ returns the word $j g(w)$, where $x_j$ is the endpoint of the lift of $\ell_i$ and $g$ is the element corresponding to the loop $\pi_i \ell \pi_j^{-1}$.  The bottom part of Figure \ref{fig: hgroup} illustrates this.  We thus see that the action of $\alpha$ on $T$ is given by
\begin{eqnarray*}
\alpha \cdot 1 & = & 2 \cdot {\rm id} \\
\alpha \cdot 2 & = & 1 \cdot \alpha \\
\alpha \cdot 3 & = & 3 \cdot {\rm id} \\
\alpha \cdot 4 & = & 4 \cdot {\rm id}\\
\alpha \cdot 5 & = & 6 \cdot {\rm id}\\
\alpha \cdot 6 & = & 5 \cdot \delta
\end{eqnarray*}
Hence in wreath recursion notation, we have
$\alpha = <{\rm id}, \alpha, {\rm id}, {\rm id}, {\rm id}, \delta> (1 \hspace{0.1 in} 2)(5 \hspace{0.1 in} 6)$.
In many cases one can read the action of an element of the fundamental group directly from the real graph.  For instance, to compute the action of $\gamma$, note that following preimages of $\gamma$ is the same as moving the $x_i$ down along the real graph of $h$ towards $1$.  Since none of the $x_i$ are near to each other when $y = 1$, the action of $\gamma$ on the first level of $T$ is trivial.  When the $y$-values of these points reaches $1 + \epsilon$, a counterclockwise loop is executed (in the complex plane, and so not visibly on the real graph).  One sees that since $x_2$ is near $a$, this loop must encircle $a$.  For the others, the loops encircle no critical values, and so are homotopically trivial.  Thus
$$\gamma  =  <{\rm id}, \beta, {\rm id}, {\rm id}, {\rm id}, {\rm id}>. $$
Similar computations yield
\begin{eqnarray*}
\beta & = & <{\rm id}, {\rm id}, {\rm id}, \gamma, {\rm id}, {\rm id}> (3 \hspace{0.1 in} 4) \\
\delta & = & <{\rm id}, {\rm id}, {\rm id}, {\rm id}, {\rm id}, {\rm id}> (2 \hspace{0.1 in} 3)(4 \hspace{0.1 in} 5)
\end{eqnarray*}
The action of $IMG(h)$ on the first two levels of $T$ is pictured in Figure \ref{fig: faction}.  Note that the action of $IMG(h)$ on the first level of the tree is the same as the action of the monodromy group of $f$ from Theorem 2.2. 
\begin{figure}[htbp]
\begin{center}
\includegraphics{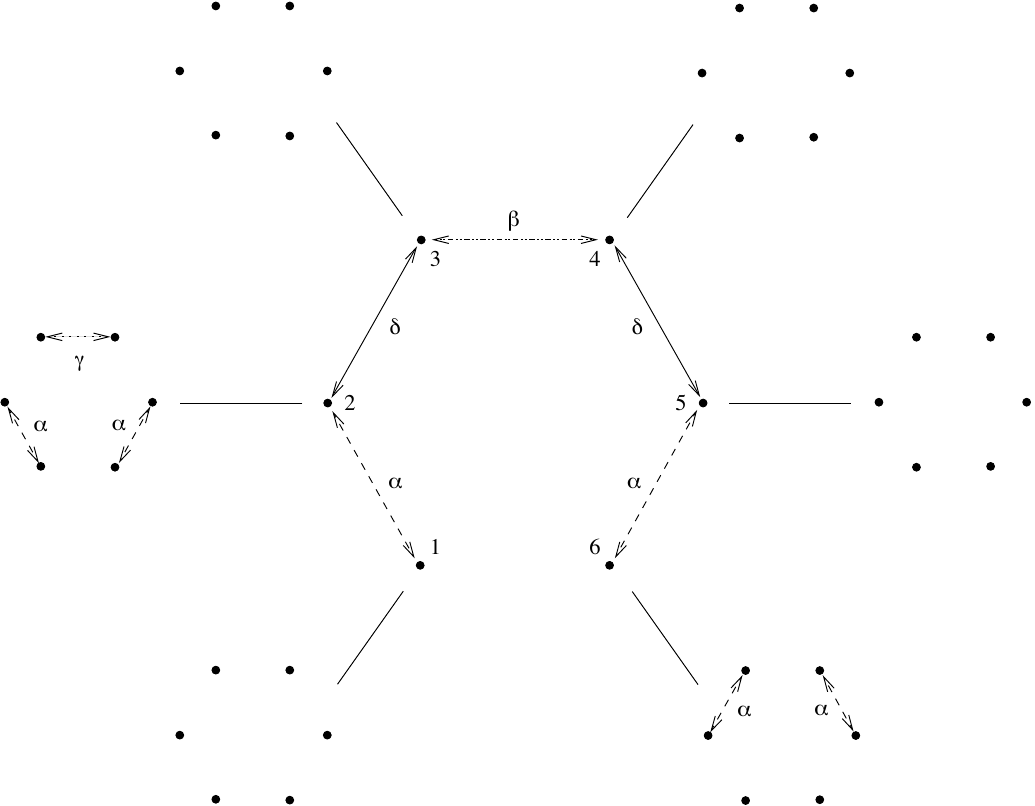}
\end{center}
\caption{Action of $IMG(h)$ on the first two levels of $T$.}
\label{fig: faction}
\end{figure}

Label a vertex on level two of $T$ by $x_{ij}$, where $i$ denotes the major branch and $j$ denotes the position within the $i$th major branch.  Consider the partition consisting of 
$$\{x_{ij}, x_{i(7-j)}, x_{(7-i)(4-j)}, x_{(7-i)(3+j)}\} \qquad i = 2,3,  j = 1,2,3$$
as well as $\{x_{11}, x_{16}, x_{62}, x_{65}\}, \{x_{12}, x_{15}, x_{63}, x_{64}\}$, and $\{x_{13}, x_{14}, x_{61}, x_{66}\}.$
One checks that all of $\alpha, \beta, \gamma, \delta$ respect this partition, and thus we have  a set of $G$-blocks each containing four elements.  Note that none of these blocks is basic.  

To show that $h$ provides a negative answer to question \ref{fatou}, note that $a$ is a super-attracting periodic point so it is contained in a Fatou component $V$ that is a super-attracting basin.  If $N$ is chosen odd then $p$ must be chosen in the Fatou set containing $1$, which does not contain a critical value. One easily sees that all preimages $h^{\circ - N}(p)$ in $V$ lie in the same major branch which is a $MG_N(h)$-block and contains $6^{N-1}$ elements.

If $N=2$ and thus $p \in V$ then the preimages $f^{\circ - 2}(p)$ that lie in $V$ correspond to the points $x_{32}, x_{42}$ which lie in the $MG_2(h)$-block $E = \{x_{32}, x_{35}, x_{42}, x_{45}\}$. So the smallest $MG_2(h)$-block containing those two points must be contained in $E$ (and is in fact equal to $E$). $E$ has only $4$ elements which is fewer than $6^1 + 1 = 7$. If $N$ is even but larger than $2$ then again $p$ must be chosen in $V$ and all preimages of $p$ in $V$ are also preimages of points in $E$. That gives at most $4\cdot 6^{N-2}$ elements and thus fewer than $6^{N-1}+1$. This completes the proof.

\end{proof}

\section*{acknowledgement}
The authors wish to thank Laurent Bartholdi for reading a preliminary version of the manuscript and for several helpful comments.

\bibliographystyle{plain}

\end{document}